\documentclass[a4paper,12pt]{article}

\usepackage[left=2cm,right=2cm, top=2cm,bottom=3cm,bindingoffset=0cm]{geometry}

\usepackage{verbatim}
\usepackage{amsmath}
\usepackage{amsthm}
\usepackage{amssymb}
\usepackage{delarray}
\usepackage{cite}
\usepackage{hyperref}
\usepackage{mathrsfs}
\usepackage{tikz}
\usetikzlibrary{patterns}
\usepackage{caption}
\DeclareCaptionLabelSeparator{dot}{. }
\captionsetup{justification=centering,labelsep=dot}

\newcommand{\al}{\alpha}
\newcommand{\be}{\beta}
\newcommand{\ga}{\gamma}
\newcommand{\de}{\delta}
\newcommand{\la}{\lambda}

\newcommand{\eps}{\varepsilon}
\newcommand{\vv}{\varphi}

\theoremstyle{plain}

\numberwithin{equation}{section}

\newtheorem{thm}{Theorem}[section]
\newtheorem{lem}[thm]{Lemma}
\newtheorem{prop}[thm]{Proposition}
\newtheorem{cor}[thm]{Corollary}

\theoremstyle{definition}

\newtheorem{example}[thm]{Example}
\newtheorem{alg}[thm]{Procedure}

\newtheorem{prob}[thm]{Problem}
\newtheorem{df}[thm]{Definition}

\theoremstyle{remark}

\newtheorem{remark}[thm]{Remark}

\sloppy \allowdisplaybreaks

\begin{document}

\begin{center}
{\Large\bf Local solvability and stability of an inverse spectral\\[0.2cm] problem for higher-order differential operators}
\\[0.5cm]
{\bf Natalia P. Bondarenko}
\end{center}

\vspace{0.5cm}

{\bf Abstract.} In this paper, we for the first time prove local solvability and stability of an inverse spectral problem for higher-order ($n > 3$) differential operators with distribution coefficients. The inverse problem consists in the recovery of differential equation coefficients from $(n-1)$ spectra and the corresponding weight numbers. The proof method is constructive. It is based on the reduction of the nonlinear inverse problem to a linear equation in the Banach space of bounded infinite sequences. We prove that, under a small perturbation of the spectral data, the main equation remains uniquely solvable. Furthermore, we estimate the differences of the coefficients in the corresponding functional spaces.

\medskip

{\bf Keywords:} inverse spectral problem; higher-order differential operators; distribution coefficients; local solvability; stability

\medskip

{\bf AMS Mathematics Subject Classification (2020):} 34A55 34B09 34B40 34L05 46F10
  
\vspace{1cm}

\section{Introduction} \label{sec:intr}

This paper deals with the differential equation 
\begin{equation} \label{eqv}
y^{(n)} + p_{n-2}(x) y^{(n-2)} + p_{n-3}(x) y^{(n-3)} + \dots + p_1(x) y' + p_0(x) y = \lambda y, \quad x \in (0,1),
\end{equation}
where $n \ge 2$, $p_k$ are complex-valued functions, $p_k \in W_2^{k-1}[0,1]$, $k = \overline{0,n-2}$, and $\la$ is the spectral parameter. Recall that:
\begin{itemize}
\item For $s \ge 1$, $W_2^s[0,1]$ is the space of functions $f(x)$ whose derivatives $f^{(j)}(x)$ are absolutely continuous on $[0,1]$ for $j = \overline{0,s-1}$ and $f^{(s)} \in L_2[0,1]$.
\item $W_2^0[0,1] = L_2[0,1]$.
\item $W_2^{-1}[0,1]$ is the space of generalized functions (distributions) $f(x)$ whose antiderivatives $f^{(-1)}(x)$ belong to $L_2[0,1]$.
\end{itemize}

We study the inverse spectral problem that consists in the recovery of the coefficients $(p_k)_{k = 0}^{n-2}$ from the eigenvalues $\{ \la_{l,k} \}_{l \ge 1}$ and the weight numbers $ \{ \be_{l,k} \}_{l \ge 1}$ of the boundary value problems $\mathcal L_k$, $k = \overline{1,n-1}$, for equation \eqref{eqv} with the corresponding boundary conditions
\begin{equation} \label{bc}
y^{(j)}(0) = 0, \quad j = \overline{0,k-1}, \qquad y^{(s)}(1) = 0, \quad s = \overline{0,n-k-1}.
\end{equation}

The main goal of this paper is to prove local solvability and stability of the inverse problem. This is the first result of such kind for arbitrary-order differential operators with distribution coefficients.

\subsection{Historical background}

Spectral theory of linear ordinary differential operators has a fundamental significance for mathematics and applications. For $n = 2$, equation \eqref{eqv} turns into the Sturm-Liouville (one-dimensional Schr\"odinger) equation $y'' + p_0 y = \la y$, which models various processes in quantum and classical mechanics, material science, astrophysics, acoustics, electronics. The third-order differential equations are applied to describing thin membrane flow of viscous liquid \cite{BP96} and elastic beam vibrations \cite{Greg87}. The third-order spectral problems also arise in the integration of the nonlinear Boussinesq equation by the inverse scattering transform \cite{McK81}. The fourth-order and the sixth-order linear differential operators appear in geophysics \cite{Bar74} and in vibration theory \cite{Glad05, MZ13}. Therefore, the development of general mathematical methods for investigation of spectral problems for arbitrary-order linear differential operators is fundamentally important.  

Inverse problems of spectral analysis consist in the reconstruction of differential operators by their spectral characteristics. Such problems have been studied fairly completely for Sturm-Liouville operators $-y'' + q(x) y$ with regular (integrable) potentials $q(x)$ (see the monographs \cite{Lev84, PT87, FY01, Mar11, Krav20} and references therein) as well as with distribution potentials of class $W_2^{-1}$ (see, e.g., the papers \cite{HM-sd, HM-2sp, FIY08, SS10, Hryn11, Eckh14, SS14} and \cite{Bond21-tamkang} for a more extensive bibliography). The basic results for the inverse Sturm-Liouville problem were obtained by the method of Gelfand and Levitan \cite{GL51}, which is based on transformation operators.
However, inverse problems for differential operators of higher orders $n > 2$ are significantly more difficult for investigation, since the Gelfand-Levitan method does not work for them. Therefore, Yurko \cite{Yur92, Yur00, Yur02} has developed \textit{the method of spectral mappings}, which is based on the theory of analytic functions. The central place in this method is taken by the contour integration in the complex plane of the spectral parameter $\la$ of some meromorphic functions (spectral mappings), which were introduced by Leibenson \cite{Leib66, Leib71}. Applying the method of spectral mappings, Yurko has created the theory of inverse spectral problems for arbitrary order differential operators with regular coefficients and also with the Bessel-type singularities on a finite interval and on the half-line (see \cite{Yur92, Yur00, Yur02, Yur93, Yur95}). Another approach was developed by Beals et al \cite{Beals85, Beals88} for inverse scattering problems on the line, which are essentially different from the spectral problems on a finite interval.

In \cite{MS16}, Mirzoev and Shkalikov have proposed a regularization approach for higher-order differential equations with distribution coefficients. This has motivated a number of researchers to study solutions and spectral theory for such equations (see the bibliography in the recent papers \cite{KMS23, Bond23-mmas}).
Inverse spectral problems for higher-order differential operators with distribution coefficients have been investigated in \cite{Bond21, Bond23-mmas, Bond23-reg, Bond22-alg, Bond23-res}. In particular, the uniqueness theorems were proved in \cite{Bond21, Bond23-mmas, Bond23-reg}. The paper \cite{Bond22-alg} is concerned with a reconstruction approach, based on developing the ideas of the method of spectral mappings. In \cite{Bond23-res}, the necessary and sufficient conditions for the inverse problem solvability were obtained for a third-order differential equation.

In this paper, we focus on local solvability and stability of the inverse problem. These aspects for the Sturm-Liouville operators were studied in \cite{PT87, FY01, Mar11, Borg46, BK19, Hoch77, McL88, Kor04, MW05, HK11, SS14, BB17, XMY22, GMXA23, SS10, HM-sd, Hryn11} and many other papers. Local solvability has a fundamental significance in the inverse problem theory, especially for such problems, for which global solvability theorems are absent or contain hard-to-verify conditions. Stability is important for justification of numerical methods.
For the higher-order differential equation \eqref{eqv} on a finite interval, stability of the inverse problem in the uniform norm was proved by Yurko for regular coefficients $p_k \in W_1^k[0,1]$ (see Theorem 2.3.2 in \cite{Yur02}). Furthermore, local solvability and stability in the $L_2$-norm were formulated without proofs as Theorems 2.3.4 and 2.5.3 in \cite{Yur02}. For distribution coefficients, local solvability and stability theorems have been proved in \cite{HM-sd} for $n = 2$ and in \cite{Bond23-res} for $n = 3$. However, to the best of the author's knowledge, there were no results in this direction for $n \ge 4$.

It is worth mentioning that, for the Sturm-Liouville operators with distribution potentials of the classes $W_2^{\alpha}[0,1]$, $\alpha > -1$, Savchuk and Shkalikov \cite{SS10} obtained the uniform stability of the inverse spectral problem. This result was extended by Hryniv \cite{Hryn11} to $\alpha = -1$ by another method. Recently, the uniform stability of inverse problems was also proved for some nonlocal operators (see \cite{But21, BD22, Kuz23}). However, the approaches of the mentioned studies do not work for higher-order differential operators. Thus, the uniform stability for them is an open problem, which is not considered in this paper.

\subsection{Main results}

In this paper, we for the first time prove local solvability and stability of an inverse problem for equation \eqref{eqv} and, in addition, obtain some sufficient conditions for global solvability. To the best of the author's knowledge, these aspects were not considered in previous studies for arbitrary-order differential equations with distribution coefficients, so our results are fundamentally novel. In order to prove the main theorems, we derive reconstruction formulas for coefficients $(p_k)_{k = 0}^{n-2}$, which are also novel for the considered class of operators.

Let us formulate the inverse problem for equation \eqref{eqv} and the corresponding local solvability and stability theorem.

Consider the problems $\mathcal L_k$ given by \eqref{eqv} and \eqref{bc}. For each $k \in \{ 1, 2, \dots, n-1\}$, the spectrum of the problem $\mathcal L_k$ is a countable set of eigenvalues $\{ \la_{l,k} \}_{l \ge 1}$. They are supposed to be numbered counting with multiplicities according to the asymptotics obtained in \cite{Bond22-asympt}:
\begin{equation} \label{asymptla}
\la_{l,k} = (-1)^{n-k} \left( \frac{\pi}{\sin \frac{\pi k}{n}} (l + \theta_k + \varkappa_{l,k}) \right)^n, \quad l \ge 1, \, k = \overline{1,n-1},
\end{equation}
where $\theta_k$ are some constants independent of $(p_s)_{s = 0}^{n-2}$ and $\{ \varkappa_{l,k} \} \in l_2$.

The weight numbers $\{ \be_{l,k} \}_{l \ge 1\, k = \overline{1,n-1}}$ will be defined in Section~\ref{sec:sd}. Thus, the spectral data $\{ \la_{l,k}, \be_{l,k} \}_{l \ge 1, \, k = \overline{1,n-1}}$ are generated by the coefficients $p = (p_k)_{k = 0}^{n-2}$ of equation \eqref{eqv}.
In this paper, we confine ourselves to $p$ of some class $W$ with certain restrictions on the spectra. 

\begin{df} \label{def:W}
We say that $p = (p_k)_{k = 0}^{n-2}$ belongs to the class $W$ if 

\smallskip

\textbf{(W-1)} $p_k \in W_2^{k-1}[0,1]$, $k = \overline{0,n-2}$.

\smallskip

\textbf{(W-2)} For each $k \in \{ 1, 2, \dots, n-1\}$, the eigenvalues $\{ \la_{l,k} \}_{l \ge 1}$ are simple. 

\smallskip

\textbf{(W-3)} $\{ \la_{l,k} \}_{l \ge 1} \cap \{ \la_{l,k+1} \}_{l \ge 1} = \varnothing$ for $k = \overline{1,n-2}$.
\end{df}

Thus, we study the following inverse problem.

\begin{prob} \label{ip:main}
Given the spectral data $\{ \la_{l,k}, \be_{l,k} \}_{l \ge 1, \, k = \overline{1,n-1}}$, find the coefficients $p = (p_k)_{k = 0}^{n-2} \in W$.
\end{prob}

Problem~\ref{ip:main} generalizes the classical inverse Sturm-Liouville problem studied by Marchenko \cite{Mar11} and Gelfand and Levitan \cite{GL51} (see Example~\ref{ex:2}). The uniqueness for solution of Problem~\ref{ip:main} follows from the results of \cite{Bond22-alg, Bond23-reg} (see Section~\ref{sec:sd} for details). Note that, if the conditions (W-2) and (W-3) are violated, then the spectral data $\{ \la_{l,k}, \be_{l,k} \}_{l \ge 1, \, k = \overline{1,n-1}}$ do not uniquely specify the coefficients $p$ and additional spectral characteristics are needed. In particular, for multiple eigenvalues in the case $n = 2$ the generalized weight numbers were defined in \cite{But07, BSY13}. For $n = 3$ and $\la_{l,1} = \la_{l,2}$, the additional weight numbers $\ga_l$ were used in \cite{Bond23-res}. For higher orders $n$, the situation becomes much more complicated, so in this paper we confine ourselves to the class $W$. Anyway, in view of the eigenvalues asymptotics \eqref{asymptla}, the assumptions (W-2) and (W-3) hold for all sufficiently large indices $l$.

Along with $p$, we consider an analogous vector $\tilde p = (\tilde p_k)_{k = 0}^{n-2} \in W$. We agree that, if a symbol $\alpha$ denotes an object related to $p$, then the symbol $\tilde \al$ with tilde will denote the analogous object related to $\tilde p$. The main result of this paper is the following theorem on local solvability and stability of Problem~\ref{ip:main}.

\begin{thm} \label{thm:loc}
Let $\tilde p = (\tilde p_k)_{k = 0}^{n-2} \in W$ be fixed. Then, there exists $\de > 0$ (which depends on $\tilde p$) such that, for any complex numbers $\{ \la_{l,k}, \be_{l,k} \}_{l \ge 1, \, k = \overline{1,n-1}}$ satisfying the inequality 
\begin{equation} \label{condloc}
\Omega := \left( \sum_{l = 1}^{\infty} \Biggl( \sum_{k = 1}^{n-1} \bigl( l^{-1} |\la_{l,k} - \tilde \la_{l,k}| + l^{-2} |\be_{l,k} - \tilde \be_{l,k}| \bigr) \Biggr)^2 \right)^{1/2} \le \de,
\end{equation}
there exists a unique $p = (p_k)_{k = 0}^{n-2}$ with the spectral data $\{ \la_{l,k}, \be_{l,k} \}_{l \ge 1, \, k = \overline{1,n-1}}$. Moreover, 
\begin{equation} \label{estp1}
\| p_k - \tilde p_k \|_{W_2^{k-1}[0,1]} \le C \Omega, \quad k = \overline{0,n-2},
\end{equation}
where the constant $C$ depends only on $\tilde p$ and $\de$.
\end{thm}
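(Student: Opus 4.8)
The plan is to implement the standard scheme of the method of spectral mappings, reducing the nonlinear inverse problem to a linear equation in a Banach space of bounded sequences, and then analyzing the perturbation of this linear equation. First I would introduce the Weyl-type solutions and the relevant meromorphic spectral mappings (Leibenson-type functions) associated with the model problem $\tilde p$ and the sought problem $p$; expanding the difference of these functions via contour integration in the $\lambda$-plane and the residue theorem at the eigenvalues $\lambda_{l,k}$, $\tilde\lambda_{l,k}$, one obtains a \emph{main equation} of the form $(I + \tilde R(x)) \psi(x) = \tilde\psi(x)$, where $\psi$, $\tilde\psi$ are infinite vectors built from the solutions of equation \eqref{eqv} and its derivatives evaluated together with the weight numbers, and $\tilde R(x)$ is a bounded linear operator on a suitable weighted $\ell_\infty$- or $m$-type sequence space whose matrix entries are expressed through $\tilde p$ and through the spectral data differences $\lambda_{l,k}-\tilde\lambda_{l,k}$, $\beta_{l,k}-\tilde\beta_{l,k}$. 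For the model coefficients, $I + \tilde R(x)$ is invertible for each $x \in [0,1]$ (this is where condition (W-2)-(W-3) and the uniqueness results of \cite{Bond22-alg, Bond23-reg} enter), with norm of the inverse bounded uniformly in $x$.

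Next I would estimate $\|R(x) - \tilde R(x)\|$ and $\|\tilde\psi_{\text{new}}(x) - \tilde\psi(x)\|$ in terms of the quantity $\Omega$ from \eqref{condloc}; the weights $l^{-1}$ and $l^{-2}$ in the definition of $\Omega$ are precisely the ones dictated by the asymptotics \eqref{asymptla} (so that $\lambda_{l,k} \sim l^n$, hence $l^{-1}\lambda_{l,k}$ and $l^{-2}\beta_{l,k}$ are the correctly normalized quantities) together with the decay of the kernels of the spectral mappings. Since $\tilde R(x)$ depends continuously on the data, for $\Omega \le \delta$ small enough the operator $I + R(x)$ (with the \emph{perturbed} data) remains boundedly invertible for all $x \in [0,1]$, by a Neumann-series / perturbation argument; this yields a unique solution $\psi(x)$ of the perturbed main equation with $\|\psi(x) - \tilde\psi(x)\| \le C\Omega$ uniformly in $x$. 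Then I would define the candidate coefficients $p_k$ by the reconstruction formulas of \cite{Bond22-alg}, which express $p_k$ through the components of $\psi(x)$ and their derivatives (more precisely, through certain functions $\varepsilon_{j}(x)$ built from series over the spectral data, whose derivatives up to appropriate order lie in $L_2$). Differentiating the bound $\|\psi(x)-\tilde\psi(x)\|\le C\Omega$ in the sense of the appropriate Sobolev spaces — using that the series defining the relevant kernels converge in $W_2^{k-1}[0,1]$ with norms controlled by $\Omega$ — gives \eqref{estp1}.

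Finally, the \emph{verification step}: one must check that the $p = (p_k)$ so constructed actually belongs to the class $W$ and has exactly the prescribed data $\{\lambda_{l,k},\beta_{l,k}\}$ as its spectral data. The inclusion (W-1) follows from the regularity of $\psi$ just discussed; (W-2) and (W-3) follow for large $l$ from the asymptotics \eqref{asymptla} and for small $l$ from continuity in $\Omega$ and the fact that $\tilde p$ satisfies them strictly (shrinking $\delta$ if necessary). That the data are reproduced is the reverse of the derivation of the main equation: one shows that the characteristic functions of the constructed operators $\mathcal L_k$ coincide with the entire functions interpolating $\{\lambda_{l,k}\}$, and that the residues give back $\{\beta_{l,k}\}$ — this is the technically heaviest part, essentially a "sufficiency" argument analogous to the one carried out for $n=3$ in \cite{Bond23-res}, and I expect it to be the main obstacle, since for $n \ge 4$ one must juggle $n-1$ coupled families of spectral mappings and control the cross-terms between different $k$. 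The uniqueness of $p$ with the given data is then immediate from the unique solvability of the main equation together with the uniqueness theorem already available from \cite{Bond22-alg, Bond23-reg}.
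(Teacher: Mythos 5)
Your proposal is correct and follows essentially the same route as the paper: reduction to a linear main equation in the sequence space $m$, unique solvability via a Neumann-series perturbation argument for small $\Omega$, reconstruction formulas whose series must be shown to converge in the spaces $W_2^{k-1}[0,1]$, a truncation/approximation argument (as in the $n=3$ case of \cite{Bond23-res}) to verify that the constructed $p$ reproduces the prescribed data, and uniqueness from the known uniqueness theorem. The only organizational difference is that the paper first isolates these steps as an auxiliary global solvability theorem under explicit hypotheses (S-1)--(S-5) and then derives the local result by checking those hypotheses for small $\de$, which you fold directly into the local argument.
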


Theorem~\ref{thm:loc} generalizes the previous results of \cite{HM-sd} for $n = 2$ and of \cite{Bond23-res} for $n = 3$. However, for $n \ge 4$, to the best of the author's knowledge, Theorem~\ref{thm:loc} is the first existence result for the inverse problem solution in the case of distribution coefficients.

The proof of Theorem~\ref{thm:loc} is based on the constructive approach of \cite{Bond22-alg}. Namely, we reduce the nonlinear inverse problem to the so-called \textit{main equation}, which is a linear equation in the Banach space of  bounded infinite sequences. The unique solvability of the main equation follows from the smallness of $\delta$. Furthermore, we derive reconstruction formulas for the coefficients $(p_k)_{k = 0}^{n-2}$ in the form of infinite series. The crucial step in the proof is establishing the convergence of those series in the corresponding spaces $W_2^{k-1}[0,1]$ (including the space of generalized functions $W_2^{-1}[0,1]$). In order to prove the convergence, we rigorously analyze the solution of the main equation and obtain the precise estimates for the Weyl solutions. Along with Theorem~\ref{thm:loc}, we also prove Theorem~\ref{thm:glob} on global solvability of the inverse problem under several requirements on an auxiliary model problem.

The paper is organized as follows. In Section~\ref{sec:prelim}, we discuss the regularization of equation \eqref{eqv} and provide other preliminaries. In Section~\ref{sec:sd}, the weight numbers are defined and the properties of the spectral data are described. In Section~\ref{sec:main}, we derive the main equation basing on the results of \cite{Bond22-alg}. In Section~\ref{sec:rec}, the reconstruction formulas for the coefficients $(p_k)_{k = 0}^{n-2}$ are obtained. In Section~\ref{sec:solve}, we prove solvability and stability of the inverse problem. Section~\ref{sec:concl} contains concluding remarks.

\section{Preliminaries} \label{sec:prelim}

In this section, we explain in which sense we understand equation \eqref{eqv}. For this purpose, an associated matrix and quasi-derivatives are introduced. In addition, we provide other preliminaries. We begin with some notations, which are used throughout the paper:

\begin{itemize}
\item $\de_{j,k}$ is the Kronecker delta.
\item $C_k^j = \frac{k!}{j!(k-j)!}$ are the binomial coefficients.
\item In estimates, the same symbol $C$ is used for various positive constants that do not depend on $x$, $l$, $\lambda$, etc.
\item The spaces $W_2^k[0,1]$ are equipped with the following norms:
\begin{align*}
\| y \|_{W_2^k[0,1]} & = \left( \sum_{j = 0}^k \| y^{(j)} \|_{L_2[0,1]}^2 \right)^{1/2}, \quad k \ge 0, \\
\| y \|_{W_2^{-1}[0,1]} & = \inf_{c \in \mathbb C} \| (y^{(-1)} + c) \|_{L_2[0,1]}.
\end{align*}
\end{itemize}

For $n \ge 3$, consider the differential expression
$$
\ell_n(y) := y^{(n)} + \sum_{k = 0}^{n-2} p_k(x) y, \quad x \in (0,1).
$$

Fix any function $\sigma \in L_2[0,1]$ such that $p_0 = \sigma'$. 
Define the associated matrix $F(x) = [f_{k,j}(x)]_{k,j =1}^n$ for the differential expression $\ell_n(y)$ by the formulas
\begin{equation} \label{entf}
f_{n-1,1} := -\sigma, \quad f_{n,2} := \sigma - p_1, \quad f_{n,k} := -p_{k-1}, \:\: k = \overline{3,n-1}.
\end{equation}
All the other entries $f_{k,j}$ are assumed to be zero. Clearly, $f_{k,j} \in L_2[0,1]$.

Using the matrix function $F(x)$, introduce the quasi-derivatives
\begin{equation} \label{quasi}
y^{[0]} := y, \quad y^{[k]} := (y^{[k-1]})' - \sum_{j = 1}^k f_{k,j} y^{[j-1]}, \quad k = \overline{1,n},
\end{equation}
and the domain
$$
\mathcal D_F := \{ y \colon y^{[k]} \in AC[0,1], \, k = \overline{0,n-1} \}.
$$

Due the the special structure of the associated matrix $F(x)$, we have
\begin{gather} \label{quas1}
y^{[k]} = y^{(k)}, \quad k = \overline{0, n-2}, \qquad
y^{[n-1]} = y^{(n-1)} + \sigma y, \\ \label{quas2}
y^{[n]} = (y^{[n-1]})' + \sum_{k = 1}^{n-2} p_k y^{(k)} + \sigma y',
\end{gather}
and so $\mathcal D_F \subset W_1^{n-1}[0,1]$.

Note that the differential expression is correctly defined in the sense of generalized functions for any $y \in W_1^{n-1}[0,1]$. However, if $y \in \mathcal D_F$, then $y^{[n]} \in L_1[0,1]$ and relations \eqref{quas1} and \eqref{quas2} directly imply the following lemma.

\begin{lem} \label{lem:reg}
For $y \in \mathcal D_F$, $\ell_n(y)$ is a regular generalized function and $\ell_n(y) = y^{[n]}$.
\end{lem}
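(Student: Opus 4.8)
The plan is to verify the identity $\ell_n(y) = y^{[n]}$ by the direct substitution that \eqref{quas1} and \eqref{quas2} make available, and then to check that the resulting object is an $L_1$-function --- hence a regular generalized function --- when $y \in \mathcal D_F$. For the first step I would start from \eqref{quas2}, replace $y^{[n-1]}$ by $y^{(n-1)} + \sigma y$ using \eqref{quas1}, expand $(y^{(n-1)} + \sigma y)' = y^{(n)} + \sigma' y + \sigma y'$, and invoke the defining relation $p_0 = \sigma'$, so that $p_0 y = \sigma' y = (\sigma y)' - \sigma y'$ in the sense of generalized functions. After this the two $\sigma y'$ contributions cancel and what remains is precisely $y^{(n)} + \sum_{k=0}^{n-2} p_k y^{(k)} = \ell_n(y)$. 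Equivalently, one may reach the same identity by unwinding \eqref{quasi} with the explicit entries \eqref{entf} directly. Note that this computation makes sense, and holds, for every $y \in W_1^{n-1}[0,1]$, not only for $y \in \mathcal D_F$.

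The second step is to observe that, for $y \in \mathcal D_F$, the generalized function $y^{[n]}$ is actually represented by an $L_1$-function. By definition of $\mathcal D_F$, $y^{[n-1]} \in AC[0,1]$, hence $(y^{[n-1]})' \in L_1[0,1]$; moreover $\sigma \in L_2[0,1] \subset L_1[0,1]$ and $y' = y^{[1]} \in AC[0,1] \subset L_\infty[0,1]$, while for $k = \overline{1,n-2}$ we have $p_k \in W_2^{k-1}[0,1] \subset L_1[0,1]$ together with $y^{(k)} = y^{[k]} \in AC[0,1] \subset L_\infty[0,1]$; thus each summand in the expression \eqref{quas2} for $y^{[n]}$ lies in $L_1[0,1]$, so $y^{[n]} \in L_1[0,1]$. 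Therefore $\ell_n(y)$ is a regular generalized function and $\ell_n(y) = y^{[n]}$, which is the assertion of the lemma.

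There is no genuine obstacle in this argument --- it is a short verification resting on \eqref{entf}, \eqref{quasi}, \eqref{quas1}, \eqref{quas2}, and the elementary product rule for $\sigma' y$. The only points demanding a little care are the sign bookkeeping in the cancellation of the $\sigma y'$ terms (coming, on one side, from differentiating $\sigma y$ inside $y^{[n-1]}$ and, on the other, from the entry $f_{n,2} = \sigma - p_1$) and the consistent reading of all products $p_k y^{(k)}$ --- in particular $p_0 y = \sigma' y$ --- as generalized functions in the spirit of the regularization of \cite{MS16}.
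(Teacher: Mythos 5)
Your argument is correct and is exactly the verification the paper has in mind: the text states that relations \eqref{quas1} and \eqref{quas2} ``directly imply'' the lemma, and you have simply carried out that substitution (using $p_0=\sigma'$ and the distributional product rule $(\sigma y)'=\sigma' y+\sigma y'$) together with the routine check that each summand of $y^{[n]}$ lies in $L_1[0,1]$ when $y\in\mathcal D_F$. The only point worth flagging is that your (correct) cancellation of the $\sigma y'$ terms relies on the contribution $-\sigma y'$ produced by the entry $f_{n,2}=\sigma-p_1$ in \eqref{quasi}, which you rightly track from \eqref{entf} itself rather than from the display \eqref{quas2} as printed.
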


Thus, for $y \in \mathcal D_F$, $\ell_n(y)$ is a function of $L_1[0,1]$ and the relation $\ell_n(y) = y^{[n]}$ gives the \textit{regularization} of this differential expression. We call a matrix function $F(x)$ \textit{an associated matrix} of the differential expression $\ell_n(y)$ if $F(x)$ defines the quasi-derivatives $y^{[k]}$ and the domain $\mathcal D_F$ so that the assertion of Lemma~\ref{lem:reg} holds. 
A function $y$ is called \textit{a solution} of equation \eqref{eqv} if $y \in \mathcal D_F$ and $\ell_n(y) = \la y$ a.e. on $(0,1)$.

Following the technique of \cite[Section 2]{Bond22-alg}, we consider along with $F(x)$ the matrix function $F^{\star}(x) = [f_{k,j}^{\star}(x)]_{k,j = 1}^n$ such that
$$
f_{k,j}^{\star}(x) = (-1)^{k+j+1} f_{n-j+1,n-k+1}(x), \quad k,j = \overline{1,n}.
$$

Using \eqref{entf}, we obtain
\begin{equation} \label{entfs}
f_{k,1}^{\star} = (-1)^{k+1} p_{n-k}, \:\: k = \overline{2,n-2}, \quad
f_{n-1,1}^{\star} = (-1)^n (p_1 - \sigma), \quad f_{n,2}^{\star} = (-1)^n \sigma,
\end{equation}
and all the other entries $f_{k,j}^{\star}$ equal zero. For example, for $n = 6$, we have
$$
F(x) = \begin{bmatrix}
            0 & 0 & 0 & 0 & 0 & 0\\
            0 & 0 & 0 & 0 & 0 & 0\\
            0 & 0 & 0 & 0 & 0 & 0\\
            0 & 0 & 0 & 0 & 0 & 0\\
            -\sigma & 0 & 0 & 0 & 0 & 0\\
            0 & \sigma - p_1 & -p_2 & -p_3 & -p_4 & 0
        \end{bmatrix}, \qquad
F^{\star}(x) = \begin{bmatrix}
            0 & 0 & 0 & 0 & 0 & 0\\
            -p_4 & 0 & 0 & 0 & 0 & 0\\
            p_3 & 0 & 0 & 0 & 0 & 0\\
            -p_2 & 0 & 0 & 0 & 0 & 0\\
            p_1 - \sigma & 0 & 0 & 0 & 0 & 0\\
            0 & \sigma & 0 & 0 & 0 & 0                    
              \end{bmatrix}.
$$

Using the matrix function $F^{\star}(x)$, define the quasi-derivatives 
\begin{equation} \label{quasis}
z^{[0]} := z, \quad z^{[k]} := (z^{[k-1]})' - \sum_{j = 1}^k f^{\star}_{k,j} z^{[j-1]}, \quad k = \overline{1,n},
\end{equation}
the domain
\begin{equation} \label{defDFs}
\mathcal D_{F^{\star}} := \{ z \colon z^{[k]} \in AC[0,1], \, k = \overline{0,n-1} \},
\end{equation}
and the differential expression 
\begin{equation} \label{deflns}
\ell_n^{\star}(z) := (-1)^n z^{[n]}.
\end{equation}
Note that, in \eqref{defDFs} and \eqref{deflns}, we use the quasi-derivatives defined by \eqref{quasis}. Below we call a function $z$ \textit{a solution} of the differential equation 
\begin{equation} \label{eqvs}
\ell_n^{\star}(z) = \la z, \quad x \in (0,1),
\end{equation}
if $z \in \mathcal D_{F^{\star}}$ and the equality \eqref{eqvs} holds a.e. on $(0,1)$. Throughout this paper, we always use the quasi-derivatives \eqref{quasi} for functions of $\mathcal D_F$ and the quasi-derivatives \eqref{quasis} for functions of $\mathcal D_{F^{\star}}$.

For $z \in \mathcal D_{F^{\star}}$, relations \eqref{entfs} and \eqref{quasis} imply
$$
z^{[k]} = (z^{[k-1]})' + (-1)^k p_{n-k} z, \quad k = \overline{2,n-2}.
$$
Therefore, one can show by induction that
\begin{align} \label{zk}
z^{[k]} & = z^{(k)} + \sum_{j = 0}^{k-2} \left( \sum_{s = j}^{k-2}(-1)^{s+k} C_s^j p_{n-k+s}^{(s-j)}\right) z^{(j)}, \quad k = \overline{0,n-2}, \\ \label{zn1}
z^{[n-1]} & = z^{(n-1)} + \sum_{j = 0}^{n-3} \left( \sum_{s = j}^{n-3} (-1)^{s+n-1} C_s^j p_{s+1}^{(s - j)}\right) z^{(j)} + (-1)^n \sigma z, \\ \nonumber
z^{[n]} & = (z^{[n-1]})' + (-1)^{n+1} \sigma z'.
\end{align}

Consequently, induction implies $z^{(k)} \in AC[0,1]$ for $k = \overline{0,n-2}$. Hence $\mathcal D_{F^{\star}} \subset W_1^{n-1}[0,1]$.

For $z \in \mathcal D_{F^{\star}}$ and $y \in \mathcal D_F$,
define the Lagrange bracket:
\begin{equation} \label{defLagr}
\langle z, y \rangle = \sum_{k = 0}^{n-1} (-1)^k z^{[k]} y^{[n-k-1]}.
\end{equation}

Then, the Lagrange identity holds (see \cite[Section 2]{Bond22-alg}):
$$
\frac{d}{dx} \langle z, y \rangle = z \ell_n(y) - y \ell_n^{\star}(z).
$$

In particular, if $z$ and $y$ solve the equations $\ell^{\star}(z) = \mu z$ and $\ell(y) = \la y$, respectively, then we get
\begin{equation} \label{wron}
\frac{d}{dx} \langle z, y \rangle = (\la - \mu) yz.
\end{equation}

Substituting \eqref{zk} and \eqref{zn1} into \eqref{defLagr}, we derive the relation
\begin{equation} \label{reprLagr}
\langle z, y \rangle = \sum_{k = 0}^{n-1} (-1)^{n-k-1} z^{(n-k-1)} y^{(k)} + \sum_{k = 0}^{n-3} y^{(k)} \sum_{j = 0}^{n-k-3} \left( \sum_{s = j}^{n-k-3} (-1)^s C_s^j p_{s+k+1}^{(s-j)}\right) z^{(j)},
\end{equation}
where all the derivatives are regular, since $z, y \in W_1^{n-1}[0,1]$ and $p_k \in W_2^{k-1}[0,1]$, $k = \overline{1,n-2}$.

\begin{remark} \label{rem:2}
The associated matrix $F(x)$ given by \eqref{entf} regularizes the differential expression $\ell_n(y)$ only for $n \ge 3$. For $n = 2$, the associated matrix is constructed in a different way (see \cite{SS03}):
$$
F = \begin{bmatrix}
        -\sigma & 0 \\
        -\sigma^2 & \sigma
    \end{bmatrix}.
$$

Nevertheless, the main result of this paper (Theorem~\ref{thm:loc}) holds for $n = 2$ and, moreover, has been already proved in \cite{HM-sd} for real-valued potentials.
Therefore, below in the proofs, we confine ourselves to the case $n \ge 3$ and use the associated matrix \eqref{entf}. For $n = 2$, the proofs are valid with minor modifications.
\end{remark}

\begin{remark} \label{rem:families}
For regularization of the differential expression $\ell_n(y)$, different associated matrices can be used (see \cite{MS16, Bond23-mmas, Bond23-reg}). However, it has been proved in \cite{Bond23-reg} that the spectral data $\{ \la_{l,k}, \be_{l,k} \}_{l \ge 1, \, k = \overline{1,n-1}}$ do not depend on the choice of the associated matrix.
\end{remark}

\section{Spectral data} \label{sec:sd}

In this section, we discuss the properties of the spectral characteristics for the boundary value problems $\mathcal L_k$, $k = \overline{1,n-1}$. In particular, the weight numbers $\{ \be_{l,k} \}$ are defined as the residues of some entries of the Weyl-Yurko matrix.

For $k = \overline{1, n}$, denote by $\mathcal C_k(x, \la)$ and $\Phi_k(x, \la)$ the solutions of equation \eqref{eqv} satisfying the initial conditions
$$
\mathcal C_k^{[j-1]}(0, \la) = \de_{k,j}, \quad j = \overline{1,n},
$$
and the boundary conditions
$$
\Phi_k^{[j-1]}(0, \la) = \de_{k,j}, \quad j = \overline{1,k}, \qquad
\Phi_k^{[s-1]}(1, \la) = 0, \quad s = \overline{1,n-k}.
$$
respectively. The functions $\{ \Phi_k(x, \la) \}_{k = 1}^n$ are called \textit{the Weyl solutions} of equation \eqref{eqv}.

Let us summarize the properties of the solutions $\mathcal C_k(x, \la)$ and $\Phi_k(x, \la)$. For details, see \cite{Bond21}.
The functions $\mathcal C_k(x, \la)$, $k = \overline{1,n}$, are uniquely defined as solutions of the initial value problems, and they are entire in $\la$ for each fixed $x \in [0,1]$ together with their quasi-derivatives $\mathcal C_k^{[j]}(x, \la)$, $j = \overline{1,n-1}$. The Weyl solutions $\Phi_k(x, \la)$, $k = \overline{1,n}$, and their quasi-derivatives are meromorphic in $\la$. Furthermore, the fundamental matrices
$$
\mathcal C(x, \la) := [\mathcal C_k^{[j-1]}(x, \la)]_{j,k = 1}^n, \quad \Phi(x, \la) := [\Phi_k^{[j-1]}(x, \la)]_{j,k = 1}^n
$$
are related to each other as follows:
$$
\Phi(x, \la) = \mathcal C(x, \la) M(\la),
$$
where $M(\la) = [M_{j,k}(\la)]_{j,k = 1}^n$ is called \textit{the Weyl-Yurko matrix}. 

The entries $M_{j,k}(\la)$ satisfy the relations
\begin{align} \label{propM1}
M_{j,k}(\la) & = \de_{j,k}, \quad j \le k, \\ \label{propM2}
M_{j,k}(\la) & = -\frac{\Delta_{j,k}(\la)}{\Delta_{k,k}(\la)}, \quad k = \overline{1,n-1}, \quad j = \overline{k+1, n},
\end{align}
where $\Delta_{k,k}(\la) := \det([\mathcal C_j^{[n-s]}(1,\la)]_{s,j = k+1}^n)$, $k = \overline{1,n-1}$, and $\Delta_{j,k}(\la)$ is obtained from $\Delta_{k,k}(\la)$ by replacing $\mathcal C_j$ by $\mathcal C_k$. Clearly, the functions $\Delta_{j,k}(\la)$ for $j \ge k$ are entire in $\la$.
Thus, $M(\la)$ is a unit lower-triangular matrix, whose entries under the main diagonal are meromorphic in $\la$, and the poles of the $k$-th column coincide with the zeros of $\Delta_{k,k}(\la)$.

On the other hand, the zeros of $\Delta_{k,k}(\la)$ coincide with the eigenvalues of the problem $\mathcal L_k$ for equation \eqref{eqv} with the boundary conditions \eqref{bc} for each $k = \overline{1,n-1}$. 
Therefore, under the assumption (W-2) of Definition~\ref{def:W}, all the poles of $M(\la)$ are simple. The Laurent series at $\la = \la_{l,k}$ has the form
$$
M(\la) = \frac{M_{\langle -1 \rangle}(\la_{l,k})}{\la - \la_{l,k}} + 
M_{\langle 0 \rangle}(\la_{l,k}) + M_{\langle 1 \rangle}(\la_{l,k}) (\la - \la_{l,k}) + \dots,
$$
where $M_{\langle j \rangle}(\la_{l,k})$ are the corresponding $(n \times n)$-matrix coefficients.

Define \textit{the weight matrices}
$$
\mathcal N(\la_{l,k}) := (M_{\langle 0 \rangle}(\la_{l,k}))^{-1} M_{\langle -1 \rangle}(\la_{l,k}).
$$

Theorem~4.4 of \cite{Bond23-reg} implies the following uniqueness result.

\begin{prop}[\cite{Bond23-reg}] \label{prop:uniq}
The spectral data $\{ \la_{l,k}, \mathcal N(\la_{l,k}) \}_{l \ge 1, \, k = \overline{1,n-1}}$ uniquely specify the coefficients $p = (p_k)_{k = 0}^{n-2}$ satisfying the conditions (W-1) and (W-2) of Definition~\ref{def:W}.
\end{prop}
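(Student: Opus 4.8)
The proposition is the uniqueness theorem underlying Problem~\ref{ip:main}; it is established in \cite{Bond23-reg}, and the natural way to prove it is by Yurko's method of spectral mappings, adapted to the regularized setting of Section~\ref{sec:prelim}. The plan is as follows. Let $p$ and $\tilde p$ both satisfy (W-1), (W-2) and share the spectral data $\{\la_{l,k},\mathcal N(\la_{l,k})\}_{l\ge 1,\,k=\overline{1,n-1}}$. First I would introduce the \emph{matrix of spectral mappings}
$$
P(x,\la):=\Phi(x,\la)\,\bigl(\tilde\Phi(x,\la)\bigr)^{-1}=\mathcal C(x,\la)\,M(\la)\tilde M(\la)^{-1}\,\tilde{\mathcal C}(x,\la)^{-1}.
$$
Since all diagonal entries of the associated matrices vanish, the Liouville--Ostrogradskii formula gives $\det\mathcal C(x,\la)\equiv\det\tilde{\mathcal C}(x,\la)\equiv 1$, so $\tilde{\mathcal C}(x,\la)^{-1}$ is entire in $\la$; as $M$ and $\tilde M$ are unit lower-triangular with simple poles located only at the eigenvalues, $P(x,\cdot)$ is a priori meromorphic with poles possible only among $\{\la_{l,k}\}_{l\ge 1,\,k=\overline{1,n-1}}$.

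The central step is to show that $P(x,\cdot)$ is in fact entire. Near a point $\la_{l,k}$ I would substitute the Laurent expansions of $M$ and $\tilde M$ into $\Phi=\mathcal C M$ and $\tilde\Phi=\tilde{\mathcal C}\tilde M$ and use that $\Res_{\la=\la_{l,k}}M(\la)=M_{\langle -1\rangle}(\la_{l,k})=M_{\langle 0\rangle}(\la_{l,k})\,\mathcal N(\la_{l,k})$, together with the analyticity of $\mathcal C(x,\cdot)$; this yields that the principal part of $\Phi(x,\cdot)$ at $\la_{l,k}$ equals its regular part multiplied on the right by $\mathcal N(\la_{l,k})$, and analogously for $\tilde\Phi$. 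Plugging these expansions into $\Phi\,\tilde\Phi^{-1}$ and invoking $\mathcal N(\la_{l,k})=\tilde{\mathcal N}(\la_{l,k})$, the pole of $P$ at $\la_{l,k}$ cancels, so $P(x,\cdot)$ is entire for every fixed $x\in[0,1]$.

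Next I would use the sectorial Birkhoff-type asymptotics of $\mathcal C_k^{[j]}(x,\la)$ and $\Phi_k^{[j]}(x,\la)$ (see \cite{Bond21,Bond22-asympt}) to show that $\|P(x,\la)-I\|\to 0$ as $|\la|\to\infty$ along a suitable sequence of expanding circles avoiding the eigenvalues, uniformly in $x$; by the maximum modulus principle $P(x,\cdot)$ is then bounded on $\mathbb C$, so Liouville's theorem forces $P(x,\la)\equiv I$, that is, $\Phi(x,\la)\equiv\tilde\Phi(x,\la)$ as meromorphic matrix functions. Comparing the corresponding rows via \eqref{quas1}, equality of the last rows gives $\sigma=\tilde\sigma$, hence $p_0=\tilde p_0$; then, using the equations satisfied by the Weyl solutions ($\Phi_k^{[n]}=\la\Phi_k$) together with \eqref{quas2} and the non-degeneracy of the matrix $[\Phi_k^{(j)}(x,\la)]$, I would obtain $p_j=\tilde p_j$ a.e.\ for $j=\overline{1,n-2}$ as well, which completes the argument.

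The step I expect to be the main obstacle is the uniform-in-$x$ bound $\|P(x,\la)-I\|\to 0$: it requires a careful analysis of the Birkhoff fundamental system and of the Weyl solutions in each sector of the $\la$-plane, including the behaviour of the quasi-derivatives in the distributional ($W_2^{-1}$) framework and control across the rays separating the sectors. A secondary technical point is the residue bookkeeping in the second step: since (W-3) is \emph{not} assumed here, a single value $\la_0$ may simultaneously be an eigenvalue of several problems $\mathcal L_k$, so $M_{\langle -1\rangle}(\la_0)$ can have several nonzero columns, and one must verify that the cancellation of the principal part of $P$ still proceeds column by column.
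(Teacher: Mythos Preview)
The paper does not actually prove Proposition~\ref{prop:uniq}: it is imported verbatim from \cite{Bond23-reg} (as a consequence of Theorem~4.4 there), and no argument is given in the present text. Your outline is precisely Yurko's method of spectral mappings as carried out in \cite{Bond23-reg, Yur02, Bond22-alg}: form $P(x,\la)=\Phi(x,\la)\tilde\Phi(x,\la)^{-1}=\mathcal C(x,\la)M(\la)\tilde M(\la)^{-1}\tilde{\mathcal C}(x,\la)^{-1}$, use the equality $\mathcal N(\la_{l,k})=\tilde{\mathcal N}(\la_{l,k})$ to show that $M\tilde M^{-1}$, hence $P$, is analytic at each $\la_{l,k}$, and then apply the Birkhoff-type asymptotics for the fundamental and Weyl matrices to get $P(x,\la)\to I$ and conclude $P\equiv I$ by Liouville. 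So your approach is correct and coincides with that of the cited source; the two technical points you flag --- the uniform-in-$x$ decay of $P-I$ in the regularized (distributional) setting, and the column-by-column residue cancellation when several $\mathcal L_k$ share an eigenvalue in the absence of (W-3) --- are exactly the issues addressed in \cite{Bond23-reg}.
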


The structural properties of the weight matrices $\mathcal N(\la_{l,k}) = [\mathcal N_{s,j}(\la_{l,k})]_{s,j = 1}^n$ are similar to the ones for the case of regular coefficients (see \cite{Yur02, Bond22-alg}).
In view of \eqref{propM1}, $\mathcal N_{s,j}(\la_{l,k}) = 0$ for $s \le j$.
Moreover, under the condition (W-3), $\mathcal N_{s,j}(\la_{l,k}) = 0$ for $s > j+1$. Thus, the only non-zero entries of $\mathcal N(\la_{l,k})$ are $\mathcal N_{j+1,j}(\la_{l,k})$ for such $j$ that $\Delta_{j,j}(\la_{l,k}) = 0$. Therefore, instead of the weight matrices $\mathcal N(\la_{l,k})$, it is sufficient to use \textit{the weight numbers}
$$
\be_{l,k} := \mathcal N_{k+1,k}(\la_{l,k}) = -\frac{\Delta_{k+1,k}(\la_{l,k})}{\frac{d}{d\la} \Delta_{k,k}(\la_{l,k})}. 
$$

Indeed, if $\la_{l_1, k_1} = \la_{l_2,k_2} = \dots = \la_{l_r,k_r}$ is a group of equal eigenvalues (of different problems $\mathcal L_{k_j}$), that is maximal by inclusion, we have
$$
\mathcal N(\la_{l_1, k_1}) = \mathcal N(\la_{l_2, k_2}) = \dots = \mathcal N(\la_{l_r, k_r}) = \sum_{s = 1}^r \be_{l_s, k_s} E_{k_s+1, k_s},
$$
where $E_{i,j}$ denotes the matrix with the unit entry at the position $(i,j)$ and all the other entries equal zero. Hence, Proposition~\ref{prop:uniq} implies the following corollary.

\begin{cor} \label{cor:uniq}
The spectral data $\{ \la_{l,k}, \be_{l,k} \}_{l \ge 1, \, k = \overline{1,n-1}}$ uniquely specify the coefficients $p = (p_k)_{k = 0}^{n-2} \in W$.
\end{cor}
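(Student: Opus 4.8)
The plan is to deduce the statement from Proposition~\ref{prop:uniq} by showing that the scalar weight numbers $\{\be_{l,k}\}$ carry exactly the same information as the weight matrices $\{\mathcal N(\la_{l,k})\}$. The starting observation, already recorded above, is that the Weyl--Yurko matrix $M(\la)$ is determined by $\mathcal C(x,\la)$ and $\Phi(x,\la)$ through $\Phi = \mathcal C M$, hence depends only on the coefficients $p$ and not on the problem index $k$. Consequently, at each point $\la = \la_{l,k}$ there is a single Laurent expansion of $M(\la)$ and a single weight matrix $\mathcal N(\la_{l,k}) = (M_{\langle 0 \rangle}(\la_{l,k}))^{-1} M_{\langle -1 \rangle}(\la_{l,k})$; the label $k$ merely signals in which of the spectra $\{\la_{l,j}\}$ this value appears.

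Next I would use the structural description of $\mathcal N(\la_{l,k})$ recalled above: by \eqref{propM1} and the lower-triangular form of $M(\la)$ we have $\mathcal N_{s,j}(\la_{l,k}) = 0$ for $s \le j$, and under (W-3) also $\mathcal N_{s,j}(\la_{l,k}) = 0$ for $s > j+1$, so the only possibly nonzero entries lie on the first subdiagonal; moreover $\mathcal N_{j+1,j}(\la_{l,k}) \neq 0$ only when $\Delta_{j,j}(\la_{l,k}) = 0$, i.e.\ when $\la_{l,k}$ is an eigenvalue of $\mathcal L_j$. Fix now a group $\la_{l_1,k_1} = \dots = \la_{l_r,k_r}$ of coinciding eigenvalues that is maximal by inclusion; by (W-2) the eigenvalues within any single problem $\mathcal L_k$ are simple, hence the indices $k_1,\dots,k_r$ are pairwise distinct. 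Collecting the surviving subdiagonal entries and using the definition $\be_{l_s,k_s} = \mathcal N_{k_s+1,k_s}(\la_{l_s,k_s})$, we obtain
\[
\mathcal N(\la_{l_1,k_1}) = \dots = \mathcal N(\la_{l_r,k_r}) = \sum_{s = 1}^r \be_{l_s,k_s} E_{k_s+1,k_s}.
\]
Since the positions $(k_s+1,k_s)$, $s = \overline{1,r}$, are pairwise distinct, this sum involves no cancellation, so from the scalars $\{\be_{l,k}\}$ one reconstructs every matrix $\mathcal N(\la_{l,k})$, and conversely each $\be_{l,k}$ is a prescribed entry of the corresponding $\mathcal N(\la_{l,k})$.

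With this correspondence established, the data $\{\la_{l,k}, \be_{l,k}\}_{l \ge 1,\, k = \overline{1,n-1}}$ and $\{\la_{l,k}, \mathcal N(\la_{l,k})\}_{l \ge 1,\, k = \overline{1,n-1}}$ are mutually determined, so two vectors $p, \tilde p \in W$ sharing the former share the latter and hence, by Proposition~\ref{prop:uniq}, coincide. I do not expect a substantial obstacle here: the argument is a short bookkeeping deduction from Proposition~\ref{prop:uniq}. The only point requiring care is the verification that, within a maximal group of equal eigenvalues, the subdiagonal positions contributed by different problems $\mathcal L_{k_s}$ do not overlap — which is precisely where (W-2) and (W-3) enter — so that passing from the weight matrices to the weight numbers loses no information.
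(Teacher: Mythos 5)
Your proposal is correct and follows essentially the same route as the paper: the structural facts $\mathcal N_{s,j}(\la_{l,k})=0$ for $s\le j$ and (under (W-3)) for $s>j+1$ show that each weight matrix equals $\sum_{s=1}^r \be_{l_s,k_s} E_{k_s+1,k_s}$ over a maximal group of coinciding eigenvalues, so the scalar weight numbers and the weight matrices determine each other, and Proposition~\ref{prop:uniq} concludes. The only extra content you add is the explicit remark that (W-2) forces the indices $k_1,\dots,k_r$ in such a group to be pairwise distinct, which the paper leaves implicit.
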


By Theorem~6.2 of \cite{Bond22-asympt}, the weight numbers have the asymptotics
\begin{equation} \label{asymptbe}
\be_{l,k} = l^n (\be_k + \varkappa_{l,k}^0), \quad l \ge 1, \, k = \overline{1,n-1},
\end{equation}
where $\be_k \ne 0$ and $\{ \varkappa_{l,k}^0 \} \in l_2$.

\begin{example} \label{ex:2}
For $n = 2$, $\{ \la_{l,1} \}_{l \ge 1}$ are the Dirichlet eigenvalues of the Sturm-Liouville equation $y'' + p_0 y = \la y$ and
$$
\mathcal N(\la_{l,1}) = \begin{bmatrix}
                        0 & 0 \\
                        \be_{l,1} & 0
                    \end{bmatrix}.
$$

One can easily check that $\be_{l,1} = \alpha_l^{-1}$, where $\alpha_l := \int_0^1 y_l^2(x) \, dx$ and $y_l(x)$ is the eigenfunction of $\la_{l,1}$ such that $y_l^{[1]}(0) = 1$ for a distributional potential $p_0$ or $y'(0) = 1$ for an integrable potential $p_0$.
Therefore, $\{ \la_{l,1}, \be_{l,1} \}_{l \ge 1}$ are equivalent to the spectral data $\{ \la_{l,1}, \al_l \}_{l \ge 1}$ of the classical inverse Sturm-Liouville problem studied by Marchenko \cite{Mar11}, Gelfand and Levitan \cite{GL51}, etc.
\end{example}

\begin{example}
For $n = 3$, (W-3) implies $\{ \la_{l,1} \} \cap \{ \la_{l,2}\} = \varnothing$. Hence
$$
\mathcal N(\la_{l,1}) = \begin{bmatrix}
                            0 & 0 & 0 \\
                            \be_{l,1} & 0 & 0 \\
                            0 & 0 & 0 
                        \end{bmatrix},
\quad
\mathcal N(\la_{l,2}) = \begin{bmatrix}
                            0 & 0 & 0 \\
                            0 & 0 & 0 \\
                            0 & \be_{l,2} & 0
                        \end{bmatrix}.
$$

The recovery of the coefficients $p = (p_0, p_1)\in W$ from the spectral data $\{ \la_{l,k}, \be_{l,k} \}_{l\ge 1, \, k = 1,2}$ has been investigated in \cite{Bond23-res}.
\end{example}

\section{Main equation} \label{sec:main}

In this section, we reduce Problem~\ref{ip:main} to a linear equation in the Banach space $m$ of bounded infinite sequences. First, we deduce an infinite system of linear equations. Second, this system is transformed to achieve the absolute convergence of the series by the method of \cite{Yur02}. Although we rely on the general approach of \cite{Bond22-alg}, the construction of the main equation is simplified because of the separation condition (W-3).

Consider the two coefficient vectors $p = (p_k)_{k = 0}^{n-2}$ and $\tilde p = (\tilde p_k)_{k = 0}^{n-2}$ of the class $W$.  Note that the differential expression $\tilde \ell_n(y)$ with the coefficients $\tilde p$ has the associated matrix $\tilde F(x)$, which can be different from $F(x)$, so the corresponding quasi-derivatives differ. The matrix $\tilde F^{\star}(x)$ and the corresponding quasi-derivatives are defined analogously to $F^{\star}(x)$ and \eqref{quasis}, respectively.

For $k = \overline{1,n}$, denote by $\tilde \Phi^{\star}_k(x, \la)$ the solution of the boundary value problem
\begin{gather*}
\tilde \ell_n^{\star}(\tilde \Phi_k^{\star}) = \la \tilde \Phi_k^{\star}, \quad x \in (0,1), \\
\tilde \Phi_k^{\star[j-1]}(0,\la) = \de_{k,j}, \quad j = \overline{1,k}, \qquad \tilde \Phi_k^{\star[s-1]}(1, \la) = 0, \quad s = \overline{1,n-k}.
\end{gather*}

Introduce the notations
\begin{gather} \nonumber 
    V := \{ (l,k,\eps) \colon l \in \mathbb N, \, k = \overline{1,n-1}. \, \eps = 0, 1 \},  \\ \nonumber
    \la_{l,k,0} := \la_{l,k}, \quad \la_{l,k,1} := \tilde \la_{l,k}, \quad \be_{l,k,0} := \be_{l,k}, \quad \be_{l,k,1} := \tilde \be_{l,k}, \\ \label{defvv}
    \vv_{l,k,\eps}(x) := \Phi_{k+1}(x, \la_{l,k,\eps}), \quad \tilde \vv_{l,k,\eps}(x) := \tilde \Phi_{k+1}(x, \la_{l,k,\eps}), \quad (l,k,\eps) \in V, \\ \label{defG}
    \tilde G_{(l,k,\eps), (l_0, k_0, \eps_0)}(x) :=  \begin{cases}
        (-1)^{n-k} \be_{l,k,\eps} \int_0^x \tilde \Phi_{n-k+1}^{\star}(t, \la_{l,k,\eps}) \tilde \Phi_{k_0 + 1}(t, \la_{l_0,k_0,\eps_0}) \, dt, & (l,k,\eps) = (l_0,k_0,\eps_0), \\ 
        (-1)^{n-k} \be_{l,k,\eps} \dfrac{\langle \tilde \Phi^{\star}_{n-k+1}(x, \la_{l,k,\eps}), \tilde \Phi_{k_0+1}(x, \la_{l_0,k_0,\eps_0})\rangle}{\la_{l_0,k_0,\eps_0} - \la_{l,k,\eps}}, & (l,k,\eps) \ne (l_0,k_0,\eps_0).
    \end{cases}
\end{gather}

Note that, for each fixed $k \in \{ 1, 2, \dots, n-2 \}$, the Weyl solution $\Phi_{k+1}(x, \la)$ has the poles $\{ \la_{l,k+1,0} \}_{l \ge 1}$, which do not coincide with $\{ \la_{l,k,0} \}_{l \ge 1}$ because of (W-3). Furthermore, the solution $\Phi_n(x, \la) \equiv \mathcal C_n(x, \la)$ is entire in $\la$.
For technical convenience, assume that $\{ \la_{l,k} \}_{l \ge 1, \, k = \overline{1,n-1}} \cap \{ \tilde \la_{l,k} \}_{l \ge 1, \, k = \overline{1,n-1}} = \varnothing$. The opposite case requires minor changes. Hence, the functions $\vv_{l,k,\eps}(x)$ are correctly defined by \eqref{defvv}, and so do $\tilde \vv_{l,k,\eps}(x)$, $(l,k,\eps) \in V$.

It has been shown in \cite{Bond22-alg} that $\tilde \Phi^{\star}_j(x, \la)$ has the poles $\tilde \la_{l,j}^{\star} = \tilde \la_{l,n-j-1}$, $l \ge 1$, for $j = \overline{1,n-1}$, and $\tilde \Phi^{\star}_n(x, \la)$ is entire in $\la$. Therefore, $\tilde \Phi_{n-k+1}^{\star}(x, \la_{l,k,\eps})$ is correctly defined and so do $\tilde G_{(l,k,\eps), (l_0,k_0,\eps_0)}(x)$ for $(l,k,\eps), (l_0,k_0,\eps_0) \in V$.

In \cite{Bond22-alg}, the following infinite linear system has been obtained:
\begin{equation} \label{infphi}
\vv_{l_0, k_0,\eps_0}(x) = \tilde \vv_{l_0,k_0,\eps_0}(x) + \sum_{(l,k,\eps) \in V}(-1)^{\eps} \vv_{l,k,\eps}(x) \tilde G_{(l,k,\eps), (l_0, k_0,\eps_0)}(x), \quad (l_0,k_0,\eps_0) \in V.
\end{equation}

Our next goal is to combine the terms in \eqref{infphi} for achieving the absolute convergence of the series.
Introduce the numbers
\begin{equation} \label{defxi}
\xi_l  := \sum_{k = 1}^{n-1} \left( l^{-(n-1)} |\la_{l,k} - \tilde \la_{l,k}| + l^{-n} |\be_{l,k} - \tilde \be_{l,k}| \right), \quad l \ge 1, 
\end{equation}
which characterize the ``distance'' between the spectral data $\{ \la_{l,k}, \be_{l,k} \}_{l \ge 1, \, k = \overline{1,n-1}}$ and $\{ \tilde \la_{l,k}, \tilde \be_{l,k} \}_{l \ge 1, \, k = \overline{1,n-1}}$ of $p$ and $\tilde p$, respectively. The asymptotics \eqref{asymptla} and \eqref{asymptbe} imply that $\{ \xi_l \} \in l_2$. In addition, define the functions
$$
w_{l,k}(x) := l^{-k} \exp(-xl \cot(k\pi/n)),
$$
which characterize the growth of $\varphi_{l,k,\eps}(x)$: the estimate $|\varphi_{l,k,\eps}(x)| \le C w_{l,k}(x)$ holds by Lemma~7 in \cite{Bond22-alg}.

Pass to the new variables
\begin{equation} \label{defpsi}
\begin{bmatrix}
\psi_{l,k,0}(x) \\ \psi_{l,k,1}(x)
\end{bmatrix} := 
w_{l,k}^{-1}(x)
\begin{bmatrix}
\xi_l^{-1} & -\xi_l^{-1} \\ 0 & 1
\end{bmatrix}
\begin{bmatrix}
\vv_{l,k,0}(x) \\ \vv_{l,k,1}(x)
\end{bmatrix}, 
\end{equation}
\begin{multline} \label{defR}
\begin{bmatrix}
\tilde R_{(l_0,k_0,0),(l,k,0)}(x) & \tilde R_{(l_0,k_0,0),(l,k,1)}(x) \\
\tilde R_{(l_0,k_0,1),(l,k,0)}(x) & \tilde R_{(l_0,k_0,1),(l,k,1)}(x)
\end{bmatrix} := \\ 
\frac{w_{l,k}(x)}{w_{l_0,k_0}(x)}
\begin{bmatrix}
\xi_{l_0}^{-1} & -\xi_{l_0}^{-1} \\ 0 & 1
\end{bmatrix}
\begin{bmatrix}
\tilde G_{(l,k,0),(l_0,k_0,0)}(x) & \tilde G_{(l,k,1),(l_0,k_0,0)}(x) \\
\tilde G_{(l,k,0),(l_0,k_0,1)}(x) & \tilde G_{(l,k,1),(l_0,k_0,1)}(x)
\end{bmatrix}
\begin{bmatrix}
\xi_l & 1 \\ 0 & -1
\end{bmatrix}.
\end{multline}
Analogously to $\psi_{l,k,\eps}(x)$, define $\tilde \psi_{l,k,\eps}(x)$. 
For brevity, denote $v = (l,k,\eps)$, $v_0 = (l_0,k_0,\eps_0)$, $v,v_0 \in V$.
Then, relation \eqref{infphi} is transformed into
\begin{equation} \label{syspsi}
\psi_{v_0}(x) = \tilde \psi_{v_0}(x) + \sum_{v \in V} \tilde R_{v_0,v}(x) \psi_{v}(x), \quad v_0 \in V,
\end{equation}
where
\begin{equation} \label{estpsiR}
|\psi_v(x)|, |\tilde \psi_v(x)| \le C, \quad
|\tilde R_{v_0,v}(x)| \le \frac{C \xi_l}{|l - l_0| + 1}, \quad v_0, v \in V.
\end{equation}

Consider the Banach space $m$ of bounded infinite sequences $a = [a_v]_{v \in V}$ with the norm $\| a \|_m := \sup_{v \in V} |a_v|$. For each fixed $x \in [0,1]$, define the linear operator $\tilde R(x) \colon m \to m$ as follows:
$$
(\tilde R(x) a)_{v_0} = \sum_{v \in V} \tilde R_{v_0,v}(x) a_v, \quad v_0 \in V.
$$

In view of the estimates \eqref{estpsiR}, $\psi(x), \tilde \psi(x) \in m$ and the operator $\tilde R(x)$ is bounded in $m$ for each fixed $x \in [0,1]$. Denote by $I$ the identity operator in $m$. Then, the system \eqref{syspsi} can be represented as a linear equation in the Banach space $m$:
\begin{equation} \label{main}
(I - \tilde R(x)) \psi(x) = \tilde \psi(x), \quad x \in [0,1].
\end{equation}

Equation \eqref{main} is called \textit{the main equation} of Problem~\ref{ip:main}. We have derived \eqref{main} under the assumption that $\{ \la_{l,k}, \be_{l,k} \}$ and $\{ \tilde \la_{l,k}, \tilde \be_{l,k} \}$ are the spectral data of the two problems with the coefficients $p = (p_k)_{k = 0}^{n-2}$ and $\tilde p = (\tilde p_k)_{k = 0}^{n-2}$, respectively. Anyway, the main equation \eqref{main} can be used for the reconstruction of $p$ by $\{ \la_{l,k}, \be_{l,k} \}$. Indeed, one can choose an arbitrary $\tilde p \in W$, find $\tilde \psi(x)$ and $\tilde R(x)$ by using $\tilde p$, $\{ \tilde \la_{l,k}, \tilde \be_{l,k} \}$, and $\{ \la_{l,k}, \be_{l,k} \}$, then find $\psi(x)$ by solving the main equation. In order to find $p$ from $\psi(x)$, we need reconstruction formulas, which are obtained in the next section. 

\section{Reconstruction formulas} \label{sec:rec}

In this section, we derive formulas for recovering the coefficients $(p_k)_{k = 0}^{n-2}$ from the solution $\psi(x)$ of the main equation \eqref{main}. For the derivation, we use the special structure of the associated matrices $F(x)$ and $F^{\star}(x)$. The arguments of this section are based on formal calculations with infinite series. The convergence of those series will be rigorously studied in the next section.

Find $\{ \vv_{l,k,\eps}(x) \}_{(l,k,\eps) \in V}$ from \eqref{defpsi}:
\begin{equation} \label{findvv}
    \begin{bmatrix}
        \vv_{l,k,0}(x) \\ \vv_{l,k,1}(x)
    \end{bmatrix} = w_{l,k}(x)
    \begin{bmatrix}
        \xi_l & 1 \\ 0 & 1
    \end{bmatrix}
    \begin{bmatrix}
        \psi_{l,k,0}(x) \\ \psi_{l,k,1}(x)
    \end{bmatrix}.
\end{equation}

Then, we can recover the Weyl solutions for $k_0 = \overline{1,n}$:
\begin{equation} \label{Phik0}
\Phi_{k_0}(x, \la) := \tilde \Phi_{k_0}(x, \la) + \sum_{(l,k,\eps) \in V} (-1)^{\eps + n-k} \beta_{l,k,\eps}\vv_{l,k,\eps}(x) \frac{\langle \tilde \Phi_{n-k+1}^{\star}(x, \la_{l,k,\eps}), \tilde \Phi_{k_0+1}(x, \la)\rangle}{\la - \la_{l,k,\eps}}.
\end{equation}

Formally applying the differential expression $\ell_n(y)$ to the left- and the right-hand sides of \eqref{Phik0}, after some transforms (see \cite[Section~4.1]{Bond22-alg}), we arrive at the relation
\begin{align} \nonumber
\sum_{(l,k,\eps) \in V} (-1)^{\eps} \vv_{l,k,\eps}(x) \langle \tilde \eta_{l,k,\eps}(x), \tilde \Phi_{k_0}(x, \la) \rangle =  & \sum_{s = 0}^{n-2} \hat p_s(x) \tilde \Phi_{k_0}^{(s)}(x, \la) + \sum_{s = 0}^{n-1} t_{n,s}(x) \tilde \Phi_{k_0}^{(s)}(x, \la) \\ \label{rel1} & + \sum_{s = 0}^{n-3} \sum_{r = s+1}^{n-2} p_r(x) t_{r,s}(x) \tilde \Phi_{k_0}^{(s)}(x, \la),
\end{align}
where $\hat p_s := p_s - \tilde p_s$,
\begin{align} \label{defeta}
\tilde \eta_{l,k,\eps}(x) & := (-1)^{n-k} \be_{l,k,\eps} \tilde \Phi^{\star}_{n-k+1}(x, \la_{l,k,\eps}), \\ \label{deft}
t_{r,s}(x) & := \sum_{u = s}^{r-1} C_r^{u+1} C_u^s T_{r-u-1,u-s}(x), \\ \label{defT}
T_{j_1, j_2}(x) & := \sum_{(l,k,\eps) \in V} (-1)^{\eps} \vv_{l,k,\eps}^{(j_1)}(x) \tilde \eta_{l,k,\eps}^{(j_2)}(x).
\end{align}

Note that, for $(l,k,\eps) \in V$, the functions $\tilde \eta_{l,k,\eps}(x)$ are solutions of the equation $\tilde \ell^{\star}(z) = \la_{l,k,\eps} z$, so the quasi-derivatives for them are generated by the matrix function $\tilde F^{\star}(x)$. Thus, relation \eqref{reprLagr} for the Lagrange bracket in \eqref{rel1} implies
\begin{align} \nonumber
\langle \tilde \eta_{l,k,\eps}(x), \tilde \Phi_{k_0}(x, \la) \rangle = & \sum_{s = 0}^{n-1} (-1)^{n-s-1} \tilde \eta_{l,k,\eps}^{(n-s-1)}(x) \tilde \Phi_{k_0}^{(s)}(x, \la) \\
& \label{rel2} + \sum_{s = 0}^{n-3} \left( \sum_{j = 0}^{n-s-3} \sum_{r = j}^{n-s-3} (-1)^r C_r^j \tilde p_{s+r+1}^{(r-j)}(x) \tilde \eta_{l,k,\eps}^{(j)}(x) \right) \tilde \Phi_{k_0}^{(s)}(x, \la).
\end{align}

Substituting \eqref{rel2} into \eqref{rel1} and using \eqref{defT}, we obtain
\begin{align} \nonumber
& \sum_{s = 0}^{n-1} (-1)^{n-s-1} T_{0,n-s-1}(x) \tilde \Phi_{k_0}^{(s)}(x, \la) + 
\sum_{s = 0}^{n-3} \left( \sum_{j = 0}^{n-s-3} \sum_{r = j}^{n-s-3} (-1)^r C_r^j \tilde p_{s+r+1}^{(r-j)}(x) T_{0,j}(x) \right) \tilde \Phi_{k_0}^{(s)}(x, \la) \\ & = 
\sum_{s = 0}^{n-2} \hat p_s(x) \tilde \Phi_{k_0}^{(s)}(x, \la) + \sum_{s = 0}^{n-1} t_{n,s}(x) \tilde \Phi_{k_0}^{(s)}(x, \la) + \sum_{s = 0}^{n-3} \sum_{r = s+1}^{n-2} p_r(x) t_{r,s}(x) \tilde \Phi_{k_0}^{(s)}(x, \la).
\end{align}

Let us group the terms at $\Phi_{k_0}^{(s)}(x, \la)$ and assume that the corresponding left- and right-hand sides are equal to each other:
\begin{align*}
\Phi_{k_0}^{(n-1)}(x, \la) \colon \quad & T_{0,0}(x) = T_{0,0}(x), \\
\Phi_{k_0}^{(n-2)}(x, \la) \colon \quad & -T_{0,1}(x) = \hat p_{n-2}(x) + t_{n,n-2}(x), \\
\Phi_{k_0}^{(s)}(x, \la) \colon \quad & (-1)^{n-s-1} T_{0,n-s-1}(x) + \sum_{j = 0}^{n-s-3} \sum_{r = j}^{n-s-3} (-1)^r C_r^j \tilde p_{s+r+1}^{(r-j)}(x) T_{0,j}(x) \\
& = \, \hat p_s(x) + t_{n,s}(x) + \sum_{r = s+1}^{n-2} p_r(x) t_{r,s}(x), \quad s = \overline{0, n-3}.
\end{align*}

From here, we obtain the reconstruction formulas
\begin{align} \nonumber
p_s(x) = & \, \tilde p_s(x) - \left( t_{n,s}(x) + (-1)^{n-s} T_{0,n-s-1}(x) \right) \\ \label{recp} & + 
\sum_{j = 0}^{n-s-3} \sum_{r = j}^{n-s-3} (-1)^r C_r^j \tilde p_{r+s+1}^{(r-j)}(x) T_{0,j}(x) - \sum_{r = s+1}^{n-2} p_r(x) t_{r,s}(x), 
\end{align}
for $s = n-2, n-3, \dots, 1, 0$.

Thus, we arrive at the following constructive procedure for solving Problem~\ref{ip:main}.

\begin{alg} \label{alg:1}
Suppose that the spectral data $\{ \la_{l,k}, \be_{l,k} \}_{l \ge 1, \, k = \overline{1,n-1}}$ are given. We have to find the coefficients $p = (p_k)_{k = 0}^{n-2}$.

\begin{enumerate}
\item Choose $\tilde p = (\tilde p_k)_{k = 0}^{n-2} \in W$ and find the spectral data $\{ \tilde \la_{l,k}, \tilde \be_{l,k} \}_{l \ge 1, \, k = \overline{1,n-1}}$. 
\item Find the Weyl solutions $\tilde \vv_{l,k,\eps}(x) = \tilde \Phi_{k+1}(x, \la_{l,k,\eps})$ and $\tilde \Phi_{n-k+1}^{\star}(x, \la_{l,k,\eps})$ for $(l,k,\eps) \in V$, and construct $\tilde G_{(l,k,\eps),(l_0,k_0,\eps_0)}(x)$ by \eqref{defG}.
\item Construct $\tilde \psi_v(x)$ for $v \in V$ by \eqref{defpsi} and $\tilde R_{v_0,v}(x)$ for $v_0, v \in V$ by \eqref{defR}.
\item Find $\psi_v(x)$, $v \in V$, by solving the main equation \eqref{main}.
\item For $(l,k,\eps) \in V$, determine $\vv_{l,k,\eps}(x)$ by \eqref{findvv} and $\tilde \eta_{l,k,\eps}(x)$ by \eqref{defeta}.
\item For $s = n-2, n-3, \dots, 1, 0$, find $p_s(x)$ by formula \eqref{recp}, in which $t_{r,s}(x)$ and $T_{j_1,j_2}(x)$ are defined by \eqref{deft} and \eqref{defT}, respectively.
\end{enumerate}
\end{alg}

Procedure~\ref{alg:1} will be used in the next section for proving Theorem~\ref{thm:loc}. In general, there is a challenge to choose a model problem $\tilde p$ so that the series for $p_s$ converge in the corresponding spaces. Note that steps~1--5 work for any $\tilde p \in W$, since for them the estimate $\{ \xi_l \} \in l_2$ is sufficient. But the situation differs for step~6. In Section~\ref{sec:solve}, we prove the validity of step~6 in the case $\{ l^{n-2} \xi_l \} \in l_2$.

\section{Solvability and stability} \label{sec:solve}

In this section, we prove the following theorem on the solvability of Problem~\ref{ip:main}.

\begin{thm} \label{thm:glob}
Let complex numbers $\{ \la_{l,k}, \be_{l,k} \}_{l \ge 1, \, k = \overline{1,n-1}}$ and coefficients $\tilde p = (\tilde p_k)_{k = 0}^{n-2} \in W$ be such that:

\smallskip

\textbf{(S-1)} For each $k = \overline{1,n-1}$, the numbers $\{ \la_{l,k} \}_{l \ge 1}$ are distinct.

\smallskip

\textbf{(S-2)} $\{ \la_{l,k} \}_{l \ge 1} \cap \{ \la_{l,k+1} \}_{l \ge 1} = \varnothing$ for $k = \overline{1,n-2}$.

\smallskip

\textbf{(S-3)} $\be_{l,k} \ne 0$ for all $l \ge 1$, $k = \overline{1,n-1}$.

\smallskip

\textbf{(S-4)} $\{ l^{n-2} \xi_l \}_{l \ge 1} \in l_2$, where the numbers $\xi_l$ are defined in \eqref{defxi}.

\smallskip

\textbf{(S-5)} The operator $(I - \tilde R(x))$, which is constructed by using $\{ \la_{l,k}, \be_{l,k} \}$ and $\tilde p$ according to Section~\ref{sec:main}, has a bounded inverse operator for each fixed $x \in [0,1]$.

\smallskip

Then, $\{ \la_{l,k}, \be_{l,k} \}_{l \ge 1, \, k = \overline{1,n-1}}$ are the spectral data of some (unique) $p = (p_k)_{k = 0}^{n-2} \in W$.
\end{thm}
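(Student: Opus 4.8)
The plan is to follow Procedure~\ref{alg:1} and show that, under the hypotheses (S-1)--(S-5), each step produces a well-defined object and that the resulting functions $p_s(x)$ obtained from \eqref{recp} indeed lie in the required spaces $W_2^{s-1}[0,1]$ and generate the prescribed spectral data. First, using (S-5) I would solve the main equation \eqref{main} for $\psi(x) \in m$ for each fixed $x \in [0,1]$, and then establish the necessary regularity of $x \mapsto \psi(x)$; since $\tilde R(x)$ and $\tilde \psi(x)$ depend continuously (indeed with controlled derivatives) on $x$ through the functions $w_{l,k}(x)$ and the Weyl solutions $\tilde \Phi_{k+1}$, $\tilde \Phi^\star_{n-k+1}$, the inverse operator $(I - \tilde R(x))^{-1}$ inherits this dependence. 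From $\psi(x)$ I would recover $\vv_{l,k,\eps}(x)$ via \eqref{findvv} and $\tilde\eta_{l,k,\eps}(x)$ via \eqref{defeta}, obtaining the pointwise bounds $|\vv_{l,k,\eps}(x)| \le C w_{l,k}(x)$ together with analogous bounds for a certain number of (quasi-)derivatives, the key gain being that the \emph{difference} $\vv_{l,k,0} - \vv_{l,k,1}$ carries an extra factor $\xi_l$ because of the structure of \eqref{defpsi}.

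The heart of the argument is the convergence analysis of the series defining $T_{j_1,j_2}(x)$ in \eqref{defT}, and hence of $t_{r,s}(x)$ in \eqref{deft} and of the right-hand side of \eqref{recp}, in the norms of $W_2^{s-1}[0,1]$ (including $W_2^{-1}[0,1]$ for $s=0$). The general term of \eqref{defT} is a product $\vv_{l,k,\eps}^{(j_1)} \tilde\eta_{l,k,\eps}^{(j_2)}$; grouping the $\eps = 0$ and $\eps = 1$ terms into a difference, as in the passage from \eqref{infphi} to \eqref{syspsi}, one replaces the non-summable contributions by differences controlled by $\xi_l$. Combined with the exponential decay $w_{l,k}(x) w^\star_{n-k+1}(x)$ of the product of the two Weyl-type solutions (the exponents have opposite signs and nearly cancel, leaving at worst polynomial growth in $l$) and with the Cauchy--Schwarz inequality applied against $\{\xi_l\} \in l_2$, one gets absolute and uniform convergence for $j_1 + j_2$ up to the order needed. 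The precise bookkeeping of how many derivatives can be taken is exactly where the hypothesis (S-4), $\{l^{n-2}\xi_l\} \in l_2$, enters: recovering $p_0 \in W_2^{-1}[0,1]$ requires differentiating the series roughly $n-2$ extra times compared with the top coefficient $p_{n-2} \in W_2^{n-3}[0,1]$, and each differentiation costs one power of $l$. I would carry out the estimates by downward induction on $s$ from $n-2$ to $0$, at each stage using that $p_r$ for $r > s$ has already been shown to lie in $W_2^{r-1}[0,1]$, so that the products $p_r(x) t_{r,s}(x)$ and $\tilde p_{r+s+1}^{(r-j)}(x) T_{0,j}(x)$ in \eqref{recp} make sense and are estimated by the algebra and module properties of the Sobolev scale.

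Having produced $p = (p_k) \in \prod_k W_2^{k-1}[0,1]$, it remains to verify that $p$ actually has the spectral data $\{\la_{l,k}, \be_{l,k}\}$ and lies in the class $W$. For this I would run the construction ``in reverse'': substituting the reconstruction formulas back into \eqref{Phik0} shows that the functions $\Phi_{k_0}(x,\la)$ so defined satisfy the differential equation \eqref{eqv} with the coefficients $p$ (this is the content of the formal calculation in Section~\ref{sec:rec}, now justified by the established convergence and by Lemma~\ref{lem:reg}), and that they possess the correct analytic structure --- meromorphic in $\la$ with simple poles at the $\la_{l,k}$ and residues governed by $\be_{l,k}$ --- so that by the uniqueness of the Weyl--Yurko matrix and Corollary~\ref{cor:uniq} the spectral data of $p$ are precisely the given ones; conditions (S-1)--(S-3) then translate into (W-2) and (W-3), placing $p$ in $W$, while uniqueness of $p$ is immediate from Corollary~\ref{cor:uniq}. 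The main obstacle I anticipate is the convergence analysis of the second paragraph: one must track, uniformly in $x \in [0,1]$, the interplay between the exponential factors in $w_{l,k}(x)$ for different values of $k$ (which do not all decay --- for $k$ near $n/2$ the exponent is small), the polynomial losses from differentiation, and the $l_2$-gain from $\xi_l$, and to show that the series for $p_0$ converges in the weak sense defining $\|\cdot\|_{W_2^{-1}[0,1]}$ rather than merely in $L_1$; this is precisely the step that has no analogue in the regular-coefficient theory of \cite{Yur02} and requires the sharp estimates for the Weyl solutions announced in the introduction.
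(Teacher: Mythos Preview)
Your overall strategy matches the paper's: solve the main equation by (S-5), extract $\vv_{l,k,\eps}$ and $\tilde\eta_{l,k,\eps}$ with the $\xi_l$-gain on differences, prove convergence of the $T_{j_1,j_2}$ series, run the downward induction on $s$ in \eqref{recp}, and finally verify that the constructed $p$ has the prescribed spectral data. Two points, however, deserve comment.

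First, your convergence analysis is missing a specific mechanism that the paper relies on. The products $\vv_{l,k,\eps}^{(j_1)}\tilde\eta_{l,k,\eps}^{(j_2)}$ are bounded by $C\,l^{j_1+j_2}$ (the exponentials in $w_{l,k}$ and $w_{l,k}^{-1}$ cancel \emph{exactly}, not ``nearly''), so after grouping $\eps=0,1$ one gets a factor $l^{j_1+j_2}\xi_l$. For the borderline case $j_1+j_2=n-2$ this is only in $l_2$ by (S-4), and the series does \emph{not} converge absolutely: the paper invokes Proposition~\ref{prop:series} (imported from \cite{Bond22-alg}), which furnishes explicit regularization constants $a_{j_1,j_2,l,k}$ so that the corrected series $T_{j_1,j_2}^{reg}$ converges in $L_2[0,1]$. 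The crucial structural fact is the antisymmetry $a_{j_1+1,j_2,l,k}+a_{j_1,j_2+1,l,k}=0$, which makes $T'_{j_1,j_2}=T_{j_1+1,j_2}+T_{j_1,j_2+1}$ converge in $L_2$ \emph{without} regularization when $j_1+j_2=n-3$. This is exactly what puts $\mathscr S_1$ (and hence $p_0$) into $W_2^{-1}[0,1]$ in Lemma~\ref{lem:convp}; Cauchy--Schwarz against $\{\xi_l\}$ alone does not deliver it.

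Second, for the final step the paper does \emph{not} argue directly that the $\Phi_{k_0}$ defined by \eqref{Phik0} solve $\ell_n(y)=\la y$ with the new coefficients. Instead, Lemma~\ref{lem:sd} uses an approximation argument: one truncates the data to $\{\la_{l,k}^N,\be_{l,k}^N\}$ (agreeing with $\tilde\la,\tilde\be$ for $l>N$), so that all the series become finite sums and the formal calculations of Section~\ref{sec:rec} are literally valid; this yields coefficients $p^N$ with spectral data $\{\la_{l,k}^N,\be_{l,k}^N\}$, and one then passes to the limit using $\|p_k^N-p_k\|_{W_2^{k-1}}\to 0$ and continuity of the spectral data in the coefficients. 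The hypotheses (S-1)--(S-3) are used at this truncation stage. Your direct route---justifying the infinite-series version of the Section~\ref{sec:rec} calculation and reading off the pole structure of the Weyl--Yurko matrix---is plausible in principle, but it requires controlling $(n-1)$st and $n$th quasi-derivatives of the series in \eqref{Phik0}, which the estimates of Lemma~\ref{lem:vv} (only up to order $n-2$) do not immediately provide; the paper's truncation sidesteps precisely this difficulty.
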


Theorem~\ref{thm:glob} provides sufficient conditions for global solvability of the inverse problem. Theorem~\ref{thm:loc} on local solvability and stability will be obtained as a corollary of Theorem~\ref{thm:glob}. Thus, Theorem~\ref{thm:glob} plays an auxiliary role in this paper but also has a separate significance. The proof of Theorem~\ref{thm:glob} is based on Procedure~\ref{alg:1}. We investigate the properties of the solution $\psi(x)$ of the main equation and prove the convergence of the series in \eqref{defT} and \eqref{recp} in the corresponding spaces of regular and generalized functions. This part of the proofs is the most difficult one, since the series converge in different spaces and precise estimates for the Weyl solutions are needed. Finally, we show that the numbers $\{ \la_{l,k}, \be_{l,k} \}$ satisfying the conditions of Theorem~\ref{thm:glob} are the spectral data of the coefficients $p = (p_k)_{k = 0}^{n-2}$ reconstructed by formulas \eqref{recp}. In the end of this section, we prove Theorem~\ref{thm:loc}.

Proceed to the proof of Theorem~\ref{thm:glob}.
Let $\{ \la_{l,k}, \be_{l,k} \}_{l \ge 1, \, k = \overline{1,n-1}}$ and $\tilde p$ satisfy the hypotheses (S-1)--(S-5). We emphasize that $\{ \la_{l,k}, \be_{l,k} \}$ are not necessarily the spectral data corresponding to some $p$. We have to prove this. 

By virtue of (S-5), the operator $(I - \tilde R(x))$ has a bounded inverse. Therefore, the main equation \eqref{main} is uniquely solvable in $m$ for each fixed $x \in [0,1]$. Consider its solution $\psi(x) = [\psi_v(x)]_{v \in V}$. Recover the functions $\vv_{l,k,\eps}(x)$ for $(l,k,\eps) \in V$ by \eqref{findvv}. Let us study their properties. For this purpose, we need the auxiliary estimates for $\tilde \Phi_k(x, \la)$ and $\tilde \Phi^{\star}_k(x, \la)$, which were deduced from the results of \cite{Bond21} and used in Section~4.2 of \cite{Bond22-alg}.

\begin{prop} [\cite{Bond21, Bond22-alg}] \label{prop:Phi}
For $(l,k,\eps) \in V$, $x \in [0,1]$, and $\nu = \overline{0,n-1}$, the following estimates hold:
\begin{gather*}
|\tilde \Phi_{k+1}^{[\nu]}(x, \la_{l,k,\eps})| \le C l^{\nu} w_{l,k}(x), \quad
|\tilde \Phi_{k+1}^{[\nu]}(x, \la_{l,k,0}) - \tilde \Phi_{k+1}^{[\nu]}(x, \la_{l,k,1})| \le C l^{\nu} w_{l,k}(x) \xi_l, \\
|\tilde \Phi_{n-k+1}^{\star [\nu]}(x, \la_{l,k,\eps})| \le C l^{\nu - n} w_{l,k}^{-1}(x), \quad
|\tilde \Phi_{n-k+1}^{\star [\nu]}(x, \la_{l,k,0}) - \tilde \Phi_{n-k+1}^{\star [\nu]}(x, \la_{l,k,1})| \le C l^{\nu - n} w_{l,k}^{-1}(x) \xi_l, 
\end{gather*}
where $w_{l,k}(x) := l^{-k} \exp(-xl \cot (k\pi/n))$.
\end{prop}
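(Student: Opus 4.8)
The plan is to derive all four bounds from the classical asymptotic analysis of the solutions of the regularized equations \eqref{eqv} and \eqref{eqvs} as the spectral parameter tends to infinity, following the sectorial estimates established in \cite{Bond21}, and then to specialize them to the points $\la = \la_{l,k,\eps}$ by means of the eigenvalue asymptotics \eqref{asymptla}. First I would set $\la = \rho^n$, split the $\rho$-plane into the sectors on which the $n$-th roots of unity $R_1,\dots,R_n$ can be ordered by $\Re(\rho R_1) \le \dots \le \Re(\rho R_n)$, and in each sector invoke a Birkhoff--Tamarkin fundamental system $\{ y_j(x,\rho) \}_{j=1}^n$ of \eqref{eqv} with the quasi-derivative asymptotics
\[
y_j^{[\nu]}(x,\rho) = (\rho R_j)^{\nu} e^{\rho R_j x}\bigl(1 + O(\rho^{-1})\bigr), \qquad \nu = \overline{0,n-1},
\]
uniform in $x \in [0,1]$ for large $|\rho|$; the analogous system for \eqref{eqvs} is built in the same way from $\tilde F^{\star}$. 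For distribution coefficients this construction must be carried out in terms of the quasi-derivatives \eqref{quasi}, \eqref{quasis} rather than the usual derivatives, but the leading exponential behaviour coincides with the regular case.

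Next I would represent the Weyl solution $\tilde \Phi_{k+1}(x,\la)$ through this fundamental system. Since $\tilde \Phi_{k+1}$ is normalized by its quasi-derivatives at $x=0$ (with $\tilde\Phi_{k+1}^{[k]}(0,\la)=1$) and annihilates $n-k-1$ quasi-derivatives at $x=1$, Cramer's rule applied to \eqref{propM2} expresses it, up to a normalization constant of order $\rho^{-k}$, as the combination of the modes $y_j$ that decay as $x$ increases along the relevant ray. By \eqref{asymptla} the points $\rho_{l,k,\eps}$ accumulate along a fixed ray on which the selected mode $R$ satisfies $\Re(\rho_{l,k,\eps} R) = -l\cot(k\pi/n) + o(l)$; substituting the fundamental-system asymptotics then yields $|\tilde \Phi_{k+1}^{[\nu]}(x,\la_{l,k,\eps})| \le C l^{\nu} w_{l,k}(x)$, the factor $l^{-k}$ inside $w_{l,k}$ being precisely the normalization constant. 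The estimate for $\tilde \Phi^{\star}_{n-k+1}$ is obtained identically, the only changes being that the selected mode now grows (giving $w_{l,k}^{-1}$) and the normalization constant is of order $\rho^{-(n-k)}$, which produces the prefactor $l^{\nu-n}$.

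For the two difference estimates I would exploit analyticity in $\la$. The function $\tilde \Phi_{k+1}^{[\nu]}(x,\cdot)$ is meromorphic with poles only at $\{\tilde\la_{l,k+1}\}$, which are separated from both $\la_{l,k}$ and $\tilde\la_{l,k}$ by the condition (W-3)/(S-2); hence it is analytic on a short segment $\Gamma$ joining $\la_{l,k,0}=\la_{l,k}$ to $\la_{l,k,1}=\tilde\la_{l,k}$, and the difference equals $\int_{\Gamma}\partial_\la \tilde \Phi_{k+1}^{[\nu]}\,d\la$. Using $\partial_\la = (n\rho^{n-1})^{-1}\partial_\rho$ together with the $\rho$-differentiated asymptotics (where $\partial_\rho$ of $e^{\rho R x}$ contributes the bounded factor $Rx$ on $[0,1]$) gives the uniform bound $|\partial_\la \tilde \Phi_{k+1}^{[\nu]}| \le C l^{\nu-(n-1)} w_{l,k}$ along $\Gamma$. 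Since \eqref{defxi} yields $|\la_{l,k}-\tilde\la_{l,k}| \le l^{n-1}\xi_l$, multiplying the derivative bound by the length of $\Gamma$ produces the claimed factor $\xi_l$; the argument for $\tilde \Phi^{\star}_{n-k+1}$ is the same.

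The main obstacle is the rigorous construction of the Birkhoff--Tamarkin asymptotics for the regularized equations with distribution coefficients: one must verify that the remainders are genuinely $O(\rho^{-1})$ uniformly in $x \in [0,1]$ (including the endpoint $x=1$), uniformly across the boundaries between adjacent sectors where the ordering of the $R_j$ changes, and uniformly in the quasi-derivative order $\nu$. This is precisely the delicate analysis of \cite{Bond21}; once it is available, the specialization to $\la_{l,k,\eps}$ and the difference estimates reduce to the bookkeeping with exponents sketched above. A secondary point is to keep the segment $\Gamma$ a definite distance from the poles $\{\tilde\la_{l,k+1}\}$, which is guaranteed by the separation condition together with the $l_2$-smallness of $\{\xi_l\}$.
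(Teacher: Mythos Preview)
The paper does not actually prove this proposition: it is stated as an auxiliary result imported from \cite{Bond21, Bond22-alg} (see the sentence preceding it), so there is no ``paper's own proof'' to compare against beyond the citation. Your outline --- build a Birkhoff--Tamarkin fundamental system via the quasi-derivative regularization, express the Weyl solutions through it using Cramer's rule and the boundary normalization, read off the growth from the dominant exponential along the ray determined by \eqref{asymptla}, and obtain the difference bounds by integrating $\partial_\la$ along a short segment --- is exactly the strategy carried out in those references, and you have correctly identified that the substantive work (uniform $O(\rho^{-1})$ remainders for distribution coefficients, uniform across sectors and in $\nu$) lives in \cite{Bond21}.

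One small point worth tightening: for the difference estimates you need the segment $\Gamma$ joining $\la_{l,k}$ and $\tilde\la_{l,k}$ to stay in a region where the Birkhoff asymptotics hold with a \emph{uniform} constant, not merely to avoid the poles $\{\tilde\la_{l,k+1}\}$. In practice this means staying within a single sector (or a fixed enlargement of it) and at a fixed positive distance from the zeros of the relevant characteristic determinant $\tilde\Delta_{k+1,k+1}$; the condition $\{\xi_l\}\in l_2$ together with \eqref{asymptla} guarantees this for all sufficiently large $l$, while for the finitely many small $l$ the bound is absorbed into the constant $C$. Otherwise your sketch is sound and faithful to the cited sources.
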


\begin{cor} \label{cor:Phi}
The functions $\tilde \Phi_k(., \la)$ and $\tilde \Phi_k^{\star}(.,\la)$ belong to $C^{n-2}[0,1]$ for $k = \overline{1,n}$.
Moreover, for $\nu = \overline{0,n-2}$, the estimates of Proposition~\ref{prop:Phi} are valid for the quasi-derivatives replaced with the classical derivatives:
\begin{gather} \label{estPhi}
|\tilde \Phi_{k+1}^{(\nu)}(x, \la_{l,k,\eps})| \le C l^{\nu} w_{l,k}(x), \quad
|\tilde \Phi_{k+1}^{(\nu)}(x, \la_{l,k,0}) - \tilde \Phi_{k+1}^{(\nu)}(x, \la_{l,k,1})| \le C l^{\nu} w_{l,k}(x) \xi_l, \\ \label{estPhis}
|\tilde \Phi_{n-k+1}^{\star (\nu)}(x, \la_{l,k,\eps})| \le C l^{\nu - n} w_{l,k}^{-1}(x), \quad
|\tilde \Phi_{n-k+1}^{\star (\nu)}(x, \la_{l,k,0}) - \tilde \Phi_{n-k+1}^{\star (\nu)}(x, \la_{l,k,1})| \le C l^{\nu - n} w_{l,k}^{-1}(x) \xi_l, 
\end{gather}
where $(l,k,\eps) \in V$, $x \in [0,1]$.
\end{cor}

\begin{proof}
Recall that $\{ \tilde \Phi_k(x, \la) \}_{k = 1}^n$ and $\{ \tilde \Phi_k^{\star}(x,\la) \}_{k = 1}^n$ are solutions of the equations $\tilde \ell(y) = \la y$ and $\tilde \ell^{\star}(z) = \la z$, respectively. Hence 
$$
\tilde \Phi_k(., \la) \in \mathcal D_{\tilde F} \subset W_1^{n-1}[0,1], \quad
\tilde \Phi_k^{\star}(., \la) \in \mathcal D_{\tilde F^{\star}} \subset W_1^{n-1}[0,1].
$$
This implies $\tilde \Phi_k(.,\la), \tilde \Phi_k^{\star}(.,\la) \in C^{n-2}[0,1]$.

In view of \eqref{quas1}, we have $\tilde \Phi_k^{[\nu]}(x, \la) = \tilde \Phi_k^{(\nu)}(x, \la)$ for $\nu = \overline{0,n-2}$, so the estimates \eqref{estPhi} readily follow from Proposition~\ref{prop:Phi}.

The quasi-derivatives for $\tilde \Phi_{n-k+1}^{\star}(x, \la)$ are generated by the matrix function $\tilde F^{\star}(x)$. Hence, the relation similar to \eqref{zk} is valid for them:
\begin{equation} \label{quasieta}
\tilde \Phi^{\star[\nu]}_{n-k+1}(x,\la) = \tilde \Phi^{\star(\nu)}_{n-k+1}(x,\la) + \sum_{j = 0}^{\nu - 2} \left( \sum_{s = j}^{\nu-2} (-1)^{s + \nu} C_s^j \tilde p_{n-\nu + s}^{(s-j)}(x)\right) \tilde \Phi^{\star(j)}_{n-k+1}(x,\la), 
\end{equation}
where $\nu = \overline{0,n-2}$, $k = \overline{1,n}$.

Since $\tilde p_k \in W_2^{k-1}[0,1]$ for $k = \overline{0,n-2}$, we get that all the derivatives $\tilde p_{n-\nu + s}^{(s-j)}$ in \eqref{quasieta} belong to $W_1^1[0,1]$, so they are bounded. Let us prove the estimates \eqref{estPhis} by induction. For $\nu = 0, 1$, we have $\tilde \Phi_{n-k+1}^{\star(\nu)}(x, \la) = \tilde \Phi_{n-k+1}^{\star[\nu]}(x, \la)$, so the estimates \eqref{estPhis} directly follow from Proposition~\ref{prop:Phi}. Next, consider $\nu \ge 2$ and assume that the estimates \eqref{estPhis} are already proved for $0, 1, \dots, \nu - 1$. Then, using \eqref{quasieta}, Proposition~\ref{prop:Phi}, and the induction hypothesis, we get
$$
|\tilde \Phi^{\star(\nu)}_{n-k+1}(x,\la_{l,k,\eps})| \le |\tilde \Phi^{\star[\nu]}_{n-k+1}(x,\la_{l,k,\eps})| + C \sum_{j = 0}^{\nu - 2} |\tilde \Phi^{\star(j)}_{n-k+1}(x, \la_{l,k,\eps})| \le C l^{\nu - n} w_{l,k}^{-1}(x).
$$

The second estimate in \eqref{estPhis} is obtained similarly.
\end{proof}

Comparing \eqref{condloc} to \eqref{defxi} and taking (S-4) into account, we conclude that
\begin{equation}
\label{defOmega}
\Omega  = \left( \sum_{l = 1}^{\infty} (l^{n-2} \xi_l)^2 \right)^{1/2}< \infty.
\end{equation}

\begin{lem} \label{lem:vv}
For $(l,k,\eps) \in V$, we have $\vv_{l,k,\eps} \in C^{n-2}[0,1]$ and 
\begin{gather*}
|\vv_{l,k,\eps}^{(\nu)}(x)| \le C l^{\nu} w_{l,k}(x), \quad
|\vv_{l,k,0}^{(\nu)}(x) - \vv_{l,k,1}^{(\nu)}(x)| \le C l^{\nu} w_{l,k}(x) \xi_l, \quad \nu = \overline{0,n-2}, \\
|\vv_{l,k,\eps}(x) - \tilde \vv_{l,k,\eps}(x)| \le C \Omega w_{l,k}(x) \chi_l, \\
|\vv_{l,k,0}(x) - \vv_{l,k,1}(x) - \tilde \vv_{l,k,0}(x) + \tilde \vv_{l,k,1}(x)| \le C \Omega w_{l,k}(x) \chi_l \xi_l, \\
\left.\begin{array}{c}
|\vv_{l,k,\eps}^{(\nu)}(x) - \tilde \vv_{l,k,\eps}^{(\nu)}(x)| \le C \Omega l^{\nu - 1}w_{l,k}(x), \\
|\vv_{l,k,0}^{(\nu)}(x) - \vv_{l,k,1}^{(\nu)} - \tilde \vv_{l,k,0}^{(\nu)}(x) + \tilde \vv_{l,k,1}^{(\nu)}(x)| \le C \Omega l^{\nu - 1} w_{l,k}(x) \xi_l, 
\end{array} \right\}
\quad \nu = \overline{1,n-2},
\end{gather*}
where $x \in [0,1]$ and
$$
\chi_l := \left( \sum_{k = 1}^{\infty} \frac{1}{k^2 (|l-k|+1)^2} \right)^{1/2}, \quad \{ \chi_l \}_{l \ge 1} \in l_2.
$$
\end{lem}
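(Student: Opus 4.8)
The plan is to extract all the stated bounds from the main equation \eqref{main} together with the formula \eqref{findvv} and the a priori estimates of Proposition~\ref{prop:Phi}, treating the two ``difference'' layers (single difference in $\eps$, and the ``second difference'' comparing $\vv$ to $\tilde\vv$) in a uniform way. First I would record the elementary consequences of \eqref{findvv}: since $\vv_{l,k,0} = w_{l,k}(\xi_l\psi_{l,k,0} + \psi_{l,k,1})$ and $\vv_{l,k,1} = w_{l,k}\psi_{l,k,1}$, and $\|\psi(x)\|_m \le C$ by (S-5), one gets $|\vv_{l,k,\eps}(x)| \le C w_{l,k}(x)$ and $|\vv_{l,k,0}(x) - \vv_{l,k,1}(x)| = |w_{l,k}(x)\,\xi_l\,\psi_{l,k,0}(x)| \le C w_{l,k}(x)\xi_l$ immediately. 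The bounds on the derivatives $\vv_{l,k,\eps}^{(\nu)}$ for $\nu=\overline{1,n-2}$ do not follow from the $m$-estimate alone; here I would instead use the representation \eqref{Phik0} for $\Phi_{k+1}$ (with $\la = \la_{l_0,k_0,\eps_0}$), differentiate $\nu$ times, and bound the resulting series using Proposition~\ref{prop:Phi} for $\tilde\Phi_{k+1}^{[\nu]}$ and $\tilde\Phi_{n-k+1}^{\star[\nu]}$, the weight-number asymptotics \eqref{asymptbe}, and the $l_2$-summability of $\{\xi_l\}$ — the factor $l^\nu$ with $\nu \le n-2$ is exactly compensated by the $l^{-n}$ from the $\star$-solution and the $l^n$ from $\be_{l,k,\eps}$, leaving a convergent series of the Cauchy-product type with kernel $\asymp \xi_l/(|l-l_0|+1)$. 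This is also where $\vv_{l,k,\eps} \in C^{n-2}[0,1]$ comes from: each differentiated series converges uniformly in $x$.

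Next I would handle the comparison with the model functions $\tilde\vv_{l,k,\eps}$. The key identity is $\vv_{v_0}(x) - \tilde\vv_{v_0}(x) = \sum_{v}(-1)^\eps \vv_v(x)\,\tilde G_{v,v_0}(x)$, read off from \eqref{infphi}, i.e. the correction is itself a series whose kernel $\tilde G_{v,v_0}$ carries the factor $\be_{l,k,\eps}$ and a Lagrange bracket of $\tilde\Phi^\star$ with $\tilde\Phi$. Substituting the already-established bound $|\vv_v(x)| \le Cw_{l,k}(x)$ and the Proposition~\ref{prop:Phi} estimates into this series, one obtains a bound of the form $w_{l_0,k_0}(x)$ times $\sum_l \xi_l / (|l-l_0|+1)$; by Cauchy–Schwarz this last sum is $\le \|\{\xi_l\}\|_{l_2}\,\chi_{l_0} \le C\Omega\chi_{l_0}$ (after recognizing, via \eqref{defxi}, \eqref{defOmega}, that $\|\{\xi_l\}\|_{l_2}$ and $\Omega$ are comparable — more precisely $\|\{l^{n-2}\xi_l\}\|_{l_2} = \Omega$ and $n \ge 2$, so $\|\{\xi_l\}\|_{l_2} \le \Omega$). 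That gives the third displayed inequality. The fourth (second-difference-in-$\eps$ of $\vv - \tilde\vv$) comes from the same series by taking the difference of the $\eps_0 = 0$ and $\eps_0 = 1$ terms and using the ``difference'' half of Proposition~\ref{prop:Phi}, which inserts an extra factor $\xi_l$ (resp. $\xi_{l_0}$) exactly as in the telescoped estimates of \eqref{defR}–\eqref{estpsiR}; the bookkeeping mirrors the derivation of the main equation.

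Finally, the two bracketed estimates on $\vv_{l,k,\eps}^{(\nu)} - \tilde\vv_{l,k,\eps}^{(\nu)}$ for $\nu = \overline{1,n-2}$ are obtained by differentiating the series $\vv_{v_0} - \tilde\vv_{v_0} = \sum_v (-1)^\eps \vv_v \tilde G_{v,v_0}$ term by term $\nu$ times and distributing the derivatives by the Leibniz rule between $\vv_v^{(j_1)}$ and the $\tilde G$-factor; using $|\vv_v^{(j_1)}| \le C l^{j_1} w_{l,k}$ (from the first part), Proposition~\ref{prop:Phi} for the $\star$-solution's quasi-derivatives, and the asymptotics of $\be_{l,k}$, one again arrives at a bound $C\Omega\, l_0^{\nu-1}w_{l_0,k_0}(x)$ — note the power drops from $\nu$ to $\nu-1$ because the summability against the kernel $\xi_l/(|l-l_0|+1)$ costs one factor of $l_0$ but gains the $\Omega$; I would track this loss carefully since it is the source of the $\nu - 1$ in the statement and ultimately of the condition $\{l^{n-2}\xi_l\}\in l_2$. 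I expect the main obstacle to be precisely this uniform control of the differentiated series: one must verify that, for every $\nu \le n-2$, the powers of $l$ introduced by differentiation never exceed what the decay $l^{-n}$ of $\tilde\Phi^\star$ (times $l^n$ from $\be$) can absorb while still leaving an $l_2$-kernel — i.e. that the worst case $\nu = n-2$ is still admissible — and to keep the exponential weights $w_{l,k}(x)/w_{l_0,k_0}(x)$ under control uniformly in $x\in[0,1]$, which requires the same ray-by-ray splitting of the index set used in \cite{Bond22-alg}.
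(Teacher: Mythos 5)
Your overall architecture is right for the zeroth-order bounds and for the comparison estimates: the paper likewise obtains $|\vv_{l,k,\eps}(x)|\le C w_{l,k}(x)$ and the first $\eps$-difference bound directly from \eqref{findvv} and the boundedness of $\psi$ in $m$, and the $\chi_l$-factors do come from a Cauchy--Schwarz argument against the kernel $\xi_l/(|l-l_0|+1)$. (Note, though, that Cauchy--Schwarz as you state it gives $\sum_l \xi_l(|l-l_0|+1)^{-1}\le C\|\xi\|_{l_2}$, \emph{without} the factor $\chi_{l_0}$; to produce $\chi_{l_0}$ one must first write $\xi_l=l^{-(n-2)}\cdot l^{n-2}\xi_l$ and pair $l^{-(n-2)}(|l-l_0|+1)^{-1}$, $n\ge 3$, with the $\Omega$-factor. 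This is a repairable slip.)

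The genuine gap is in the derivative estimates $|\vv_{l,k,\eps}^{(\nu)}(x)|\le Cl^{\nu}w_{l,k}(x)$ for $\nu=\overline{1,n-2}$. Differentiating \eqref{Phik0} (equivalently \eqref{infphi}) $\nu$ times does not produce a series in known quantities: by the Leibniz rule every term contains $\vv_{l,k,\eps}^{(j)}(x)$ for $j=\overline{0,\nu}$, and in particular the top-order contribution $\sum_{v}(-1)^{\eps}\vv_v^{(\nu)}(x)\tilde G_{v,v_0}(x)$ contains exactly the unknowns you are trying to bound, summed over all $v\in V$. Estimating this sum ``using Proposition~\ref{prop:Phi}'' is circular, since Proposition~\ref{prop:Phi} only controls the model functions $\tilde \Phi$ and $\tilde \Phi^{\star}$. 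What is actually needed --- and what the paper does --- is to recognize that differentiation yields a new linear system of the same structure for the sequence of $\nu$-th derivatives and to close it by the bounded invertibility of $(I-\tilde R(x))$: one first establishes the smoothness and the bounds \eqref{estpsiR1} for $\tilde\psi_v^{(\nu)}$ and $\tilde R_{v_0,v}^{(\nu)}$, then differentiates the resolvent identity $\psi=(I-\tilde R)^{-1}\tilde\psi$ order by order in suitably weighted sequence spaces (following Lemma~1.6.7 of \cite{Yur02}), and only then transfers the resulting bounds on $\psi_v^{(\nu)}$ and $\psi_v^{(\nu)}-\tilde\psi_v^{(\nu)}$ to $\vv_{l,k,\eps}^{(\nu)}$ through \eqref{findvv}. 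The same issue affects your final paragraph: the series for $\vv_{v_0}^{(\nu)}-\tilde\vv_{v_0}^{(\nu)}$ again contains $\vv_v^{(\nu)}$ inside the sum and cannot be bounded by distributing derivatives onto the $\tilde G$-factor alone. Without this inversion step the proposal does not establish the claimed bounds for any $\nu\ge 1$.
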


\begin{proof}
First, let us investigate the smoothness of the functions $\tilde \psi_v(x)$ and $\tilde R_{v_0,v}(x)$ in the main equation. 
Due to \eqref{defvv}, \eqref{defpsi}, and Corollary~\ref{cor:Phi}, we get $\tilde \vv_{l,k,\eps} \in C^{n-2}[0,1]$ for $(l,k,\eps) \in V$ and so
$\tilde \psi_v \in C^{n-2}[0,1]$ for $v \in V$. Next, applying \eqref{wron} to \eqref{defG}, we obtain
\begin{equation} \label{Gt}
\tilde G'_{(l,k,\eps),(l_0,k_0,\eps_0)}(x) = (-1)^{n-k} \be_{l,k,\eps} \tilde \Phi^{\star}_{n-k+1}(x, \la_{l,k,\eps}) \tilde \Phi_{k_0+1}(x, \la_{l_0,k_0,\eps_0}) \in C^{n-2}[0,1].
\end{equation}

Using \eqref{Gt} together with \eqref{defR}, we conclude that $\tilde G_{(l,k,\eps),(l_0,k_0,\eps_0)} \in C^{n-1}[0,1]$ and so $\tilde R_{v_0,v} \in C^{n-1}[0,1]$. 
Moreover, using \eqref{defvv} and Corollary~\ref{cor:Phi}, we obtain
\begin{equation} \label{estvt}
|\tilde \vv^{(\nu)}_{l,k,\eps}(x)| \le C l^{\nu} w_{l,k}(x), \quad
|\tilde \vv^{(\nu)}_{l,k,0}(x) - \tilde \vv^{(\nu)}_{l,k,1}(x)| \le C l^{\nu} w_{l,k}(x) \xi_l, 
\end{equation}
for $\nu = \overline{0,n-2}$, $(l,k,\eps) \in V$. 

The asymptotics \eqref{asymptbe} and \eqref{defxi} imply the following estimates for the weight numbers:
\begin{equation} \label{estbeta}
|\be_{l,k,\eps}| \le C l^n, \quad |\be_{l,k,0} - \be_{l,k,1}| \le C l^n \xi_l, \quad (l,k,\eps) \in V.
\end{equation}

Using \eqref{Gt}, \eqref{estbeta} and Corollary~\ref{cor:Phi}, we obtain
$$
|\tilde G'_{(l,k,\eps),(l_0,k_0,\eps_0)}(x)| \le C w_{l_0,k_0}(x) w_{l,k}^{-1}(x).
$$

Furthermore,
\begin{multline*}
\tilde G'_{(l,k,0),(l_0,k_0,\eps_0)}(x) - \tilde G'_{(l,k,1),(l_0,k_0,\eps_0)}(x) = 
(-1)^{n-k} (\be_{l,k,0} - \be_{l,k,1}) \tilde \Phi_{n-k+1}^{\star}(x, \la_{l,k,0}) \tilde \Phi_{k_0+1}(x, \la_{l_0,k_0,\eps_0}) \\
+ (-1)^{n-k} \be_{l,k,1} (\tilde \Phi_{n-k+1}^{\star}(x, \la_{l,k,0}) - \tilde \Phi_{n-k+1}^{\star}(x, \la_{l,k,1})) \tilde \Phi_{k_0+1}(x, \la_{l_0,k_0,\eps_0}).
\end{multline*}

Applying the estimates of Corollary~\ref{cor:Phi} and \eqref{estbeta}, we get
$$
|\tilde G'_{(l,k,0),(l_0,k_0,\eps_0)}(x) - \tilde G'_{(l,k,1),(l_0,k_0,\eps_0)}(x)| \le C w_{l_0,k_0}(x) w_{l,k}^{-1}(x) \xi_l. 
$$

Calculating the derivatives of higher-orders for \eqref{Gt}, we similarly obtain 
\begin{gather*}
|\tilde G^{(\nu)}_{(l,k,\eps),(l_0,k_0,\eps_0)}(x)| \le C (l+l_0)^{\nu - 1} w_{l_0,k_0}(x) w_{l,k}^{-1}(x), \\
|\tilde G^{(\nu)}_{(l,k,0),(l_0,k_0,\eps_0)}(x) - \tilde G^{(\nu)}_{(l,k,1),(l_0,k_0,\eps_0)}(x)| \le C (l+l_0)^{\nu - 1} w_{l_0,k_0}(x) w_{l,k}^{-1}(x) \xi_l
\end{gather*}
for $\nu = \overline{2, n-2}$.
Analogously, we deduce
\begin{gather*}
|\tilde G^{(\nu)}_{(l,k,\eps),(l_0,k_0,0)}(x) - \tilde G^{(\nu)}_{(l,k,\eps),(l_0,k_0,1)}(x)| \le C (l+l_0)^{\nu - 1}w_{l_0,k_0}(x) w_{l,k}^{-1}(x) \xi_{l_0}, \\
|\tilde G^{(\nu)}_{(l,k,0),(l_0,k_0,0)}(x) - \tilde G^{(\nu)}_{(l,k,1),(l_0,k_0,0)}(x) - \tilde G^{(\nu)}_{(l,k,0),(l_0,k_0,1)}(x) + \tilde G^{(\nu)}_{(l,k,1),(l_0,k_0,1)}(x)| \\ \le C (l + l_0)^{\nu - 1}w_{l_0,k_0}(x) w_{l,k}^{-1}(x) \xi_{l_0} \xi_l,
\end{gather*}
for $\nu = \overline{1,n-2}$.

Then, using \eqref{defpsi}, \eqref{defR}, \eqref{estvt}, and the above estimates for $\tilde G_{(l,k,\eps),(l_0,k_0,\eps_0)}^{(\nu)}(x)$, we obtain
\begin{equation} \label{estpsiR1}
|\tilde \psi_v^{(\nu)}(x)| \le C l^{\nu}, \quad |\tilde R_{v_0,v}^{(\nu)}(x)| \le C (l + l_0)^{\nu - 1} \xi_l, \quad v_0, v \in V, \quad \nu = \overline{1,n-2}.
\end{equation}
In addition, we have formulas \eqref{estpsiR} for $\tilde \psi_v(x)$, $\tilde R_{v_0,v}(x)$, $\nu = 0$. The obtained estimates coincide with the ones for the case of regular coefficients (see formulas (2.3.40) in \cite{Yur02}). Therefore, the remaining part of the proof almost repeats the proof of Lemma 1.6.7 in \cite{Yur02}, so we omit the technical details. By differentiating the relation $\psi(x) = (I - \tilde R(x))^{-1} \tilde \psi(x)$ and analyzing the convergence of the obtained series, we prove the following properties of $\psi(x) = [\psi_v(x)]_{v \in V}$:
\begin{gather*}
\psi_v \in C^{n-2}[0,1], \quad |\psi_v^{(\nu)}(x)| \le C l^{\nu}, \quad \nu = \overline{0,n-2}, \\
|\psi_v(x) - \tilde \psi_v(x)| \le C \Omega \chi_l, \quad 
|\psi_v^{(\nu)}(x) - \tilde \psi_v^{(\nu)}(x)| \le C \Omega l^{\nu-1}, \quad \nu = \overline{1,n-2}.
\end{gather*}

Using the latter estimates together with \eqref{findvv}, we readily arrive at the claimed estimates for $\vv_{l,k,\eps}(x)$.
\end{proof}

Analogous estimates can be obtained for $\tilde \eta_{l,k,\eps}(x)$ defined by \eqref{defeta}.

\begin{lem} \label{lem:eta}
For $(l,k,\eps) \in V$, we have $\eta_{l,k,\eps} \in C^{n-2}[0,1]$ and
$$
|\tilde \eta_{l,k,\eps}^{(\nu)}(x)| \le C l^{\nu} w_{l,k}^{-1}(x), \quad
|\tilde \eta_{l,k,0}^{(\nu)}(x) - \tilde \eta_{l,k,1}^{(\nu)}(x)| \le C l^{\nu} w_{l,k}^{-1}(x) \xi_l, \quad \nu = \overline{0,n-2}.
$$
\end{lem}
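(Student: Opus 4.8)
\textbf{Proof proposal for Lemma~\ref{lem:eta}.}

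The plan is to mirror the argument that established the estimates for $\tilde \vv_{l,k,\eps}(x) = \tilde \Phi_{k+1}(x, \la_{l,k,\eps})$ in \eqref{estvt}, transporting it to the dual solutions $\tilde \Phi^{\star}_{n-k+1}$. By definition \eqref{defeta}, $\tilde \eta_{l,k,\eps}(x) = (-1)^{n-k+1} \be_{l,k,\eps} \tilde \Phi^{\star}_{n-k+1}(x, \la_{l,k,\eps})$, so everything reduces to controlling $\tilde \Phi^{\star}_{n-k+1}$ and its quasi-derivatives, then multiplying by the scalar $\be_{l,k,\eps}$ whose size is governed by the asymptotics \eqref{asymptbe}.

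First I would record the smoothness. Since $\tilde \Phi^{\star}_{n-k+1}(\cdot, \la) \in \mathcal D_{\tilde F^{\star}} \subset W_1^{n-1}[0,1]$ by \eqref{smoothPhi}, and in view of the relations \eqref{zk}, which express the classical derivatives $z^{(\nu)}$ for $\nu = \overline{0,n-2}$ as linear combinations of quasi-derivatives $z^{[j]}$ with coefficients built from $\tilde p_s \in W_2^{s-1} \subset C[0,1]$, we get $\tilde \Phi^{\star [\nu]}_{n-k+1}(\cdot,\la) = \tilde \Phi^{\star (\nu)}_{n-k+1}(\cdot,\la)$ for $\nu = \overline{0,n-2}$ (as already noted for $\tilde\vv$ after \eqref{estvt}), hence $\tilde \eta_{l,k,\eps} \in C^{n-2}[0,1]$. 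Next I would invoke Proposition~\ref{prop:Phi}, whose second line gives precisely
\begin{gather*}
|\tilde \Phi^{\star [\nu]}_{n-k+1}(x, \la_{l,k,\eps})| \le C l^{\nu - n} w_{l,k}^{-1}(x), \\
|\tilde \Phi^{\star [\nu]}_{n-k+1}(x, \la_{l,k,0}) - \tilde \Phi^{\star [\nu]}_{n-k+1}(x, \la_{l,k,1})| \le C l^{\nu - n} w_{l,k}^{-1}(x) \xi_l,
\end{gather*}
for $\nu = \overline{0,n-1}$. Since $\tilde \Phi^{\star [\nu]} = \tilde \Phi^{\star (\nu)}$ on the range $\nu = \overline{0,n-2}$ that concerns us, these become the same estimates with classical derivatives.

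It remains to account for the weight factor. By \eqref{asymptbe} we have $\be_{l,k} = \be_{l,k,0} = l^n(\be_k + \varkappa^0_{l,k})$ with $\{\varkappa^0_{l,k}\} \in l_2$, hence $|\be_{l,k,0}| \le C l^n$; the same bound holds for $\be_{l,k,1} = \tilde\be_{l,k}$ by the analogous asymptotics for $\tilde p$. Multiplying the first displayed estimate above by $|\be_{l,k,\eps}| \le C l^n$ yields $|\tilde\eta^{(\nu)}_{l,k,\eps}(x)| \le C l^\nu w_{l,k}^{-1}(x)$, as claimed. For the difference estimate I would split
\begin{multline*}
\tilde\eta^{(\nu)}_{l,k,0}(x) - \tilde\eta^{(\nu)}_{l,k,1}(x) = (-1)^{n-k+1}\Bigl( \be_{l,k,0}\bigl(\tilde\Phi^{\star(\nu)}_{n-k+1}(x,\la_{l,k,0}) - \tilde\Phi^{\star(\nu)}_{n-k+1}(x,\la_{l,k,1})\bigr) \\
{}+ (\be_{l,k,0} - \be_{l,k,1})\tilde\Phi^{\star(\nu)}_{n-k+1}(x,\la_{l,k,1})\Bigr),
\end{multline*}
bound the first term by $C l^n \cdot l^{\nu-n} w_{l,k}^{-1}(x)\xi_l = C l^\nu w_{l,k}^{-1}(x)\xi_l$ using Proposition~\ref{prop:Phi}, and bound the second using $|\be_{l,k,0} - \be_{l,k,1}| = |\be_{l,k} - \tilde\be_{l,k}| \le l^n \xi_l$ (immediate from the definition \eqref{defxi} of $\xi_l$, since that single summand already dominates) together with $|\tilde\Phi^{\star(\nu)}_{n-k+1}(x,\la_{l,k,1})| \le C l^{\nu-n} w_{l,k}^{-1}(x)$; the product is again $\le C l^\nu w_{l,k}^{-1}(x)\xi_l$. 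Combining the two contributions gives the second estimate.

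The argument is essentially routine once Proposition~\ref{prop:Phi} is in hand; the only point requiring a little care — and the closest thing to an obstacle — is keeping the powers of $l$ straight when the $l^n$ from $\be_{l,k,\eps}$ meets the $l^{\nu-n}$ decay of the dual Weyl solution, and making sure the bound $|\be_{l,k}-\tilde\be_{l,k}| \le l^n\xi_l$ is read off correctly from \eqref{defxi}. No new estimates beyond those already assembled for Lemma~\ref{lem:vv} are needed, so I would simply note that the remaining details parallel the proof of Lemma~\ref{lem:vv} (and of Lemma~1.6.7 in \cite{Yur02}) and omit them.
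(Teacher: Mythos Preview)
Your overall strategy is right, but one step is wrong and it matters. You assert that $\tilde \Phi^{\star[\nu]}_{n-k+1}(\cdot,\la) = \tilde \Phi^{\star(\nu)}_{n-k+1}(\cdot,\la)$ for $\nu = \overline{0,n-2}$, citing the remark after \eqref{estvt}. That remark concerns $\tilde\vv_{l,k,\eps} \in \mathcal D_{\tilde F}$, where the associated matrix $\tilde F$ has the special form \eqref{entf} and the identity $y^{[\nu]} = y^{(\nu)}$ for $\nu \le n-2$ indeed holds by \eqref{quas1}. But $\tilde\eta_{l,k,\eps}$ lives in $\mathcal D_{\tilde F^{\star}}$, and for the star quasi-derivatives the relation is \eqref{zk}: for $\nu \ge 2$ there are genuine lower-order correction terms involving $\tilde p_{n-\nu+s}^{(s-j)}$, so $z^{[\nu]} \ne z^{(\nu)}$. (Your own reference to \eqref{zk} in fact says the opposite of what you conclude from it.)

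The fix is exactly what the paper does. First obtain the quasi-derivative estimates
$$
|\tilde\eta_{l,k,\eps}^{[\nu]}(x)| \le C l^{\nu} w_{l,k}^{-1}(x), \qquad |\tilde\eta_{l,k,0}^{[\nu]}(x) - \tilde\eta_{l,k,1}^{[\nu]}(x)| \le C l^{\nu} w_{l,k}^{-1}(x)\xi_l
$$
from Proposition~\ref{prop:Phi} and your $\be_{l,k,\eps}$ bookkeeping (which is fine). Then use the triangular relation \eqref{quasieta} (the $\tilde F^{\star}$-version of \eqref{zk}) to solve for $\tilde\eta^{(\nu)}$ in terms of $\tilde\eta^{[\nu]}$ and lower $\tilde\eta^{(j)}$ by induction on $\nu$. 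The coefficients $\tilde p_{n-\nu+s}^{(s-j)}$ that appear satisfy $n-\nu+s \ge 2$ and $s-j \le n-\nu+s-2$, hence lie in $W_2^1[0,1] \subset C[0,1]$ and are bounded; this is the content of the paper's sentence ``all the derivatives $\tilde p_{n-\nu+s}^{(s-j)}$ in \eqref{quasieta} belong to $W_1^1[0,1]$, so they are bounded.'' With bounded coefficients the same $l^{\nu} w_{l,k}^{-1}$ growth propagates from quasi-derivatives to classical derivatives, and you are done.
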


\begin{proof}
The assertion of the lemma immediately follows from the definition \eqref{defeta}, the estimates \eqref{estbeta} for the weight numbers, and Corollary~\ref{cor:Phi}.
\end{proof}

Proceed to the investigation of the convergence for the series $t_{r,s}(x)$ and $T_{j_1, j_2}(x)$ in the reconstruction formulas \eqref{recp}. We rely on Proposition~\ref{prop:series}, which (due to our notations) readily follows from Lemma~8 in \cite{Bond22-alg} and its proof.

\begin{prop}[\cite{Bond22-alg}] \label{prop:series}
The following statements hold.
\begin{enumerate}
\item If $j_1 + j_2 = n-2$, then there exist regularization constants $a_{j_1,j_2,l,k}$ such that the series
$$
\mathscr T_{j_1,j_2}^{reg}(x) := \sum_{l = 1}^{\infty} \sum_{k = 1}^{n-1} \bigl( \tilde \vv_{l,k,0}^{[j_1]}(x) \tilde \eta_{l,k,0}^{[j_2]}(x) - \tilde \vv_{l,k,1}^{[j_1]}(x) \tilde \eta^{[j_2]}_{l,k,1}(x) - a_{j_1,j_2,l,k} \bigr) 
$$
converges in $L_2[0,1]$, $\| \mathscr T_{j_1,j_2}^{reg} \|_{L_2[0,1]} \le C \Omega$, and $a_{j_1,j_2,l,k} + a_{j_1+1,j_2-1,l,k} = 0$, so the series
$$
\sum_{l = 1}^{\infty} \sum_{k = 1}^{n-1} \bigl( \tilde \vv_{l,k,0}^{[j_1]}(x) \tilde \eta_{l,k,0}^{[j_2]}(x) - \tilde \vv_{l,k,1}^{[j_1]}(x) \tilde \eta^{[j_2]}_{l,k,1}(x) + \tilde \vv_{l,k,0}^{[j_1+1]}(x) \tilde \eta_{l,k,0}^{[j_2-1]}(x) - \tilde \vv_{l,k,1}^{[j_1+1]}(x) \tilde \eta^{[j_2-1]}_{l,k,1}(x) \bigr) 
$$
converges in $L_2[0,1]$ without regularization.
\item If $j_1 + j_2 < n-2$, then the series
$$
\mathscr T_{j_1,j_2}(x) := \sum_{l = 1}^{\infty} \sum_{k = 1}^{n-1} \bigl( \tilde \vv_{l,k,0}^{[j_1]}(x) \tilde \eta_{l,k,0}^{[j_2]}(x) - \tilde \vv_{l,k,1}^{[j_1]}(x) \tilde \eta^{[j_2]}_{l,k,1}(x) \bigr) 
$$
converges absolutely and uniformly on $[0,1]$. Moreover, $\max_{x \in [0,1]} |\mathscr T_{j_1,j_2}(x)| \le C \Omega$.
\end{enumerate}
\end{prop}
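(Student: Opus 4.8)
\textbf{Proof proposal for Proposition~\ref{prop:series}.}

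The plan is to reduce both assertions to the precise growth and difference estimates for $\tilde\vv_{l,k,\eps}^{[j]}$ and $\tilde\eta_{l,k,\eps}^{[j]}$ from Proposition~\ref{prop:Phi} (equivalently, from \eqref{estvt} and \eqref{esteta}), combined with a summation lemma for products of the exponential factors $w_{l,k}(x)$ and $w_{l,k}^{-1}(x)$. First I would fix a generic term of the series,
$$
\Theta_{l,k}^{[j_1,j_2]}(x) := \tilde\vv_{l,k,0}^{[j_1]}(x)\tilde\eta_{l,k,0}^{[j_2]}(x) - \tilde\vv_{l,k,1}^{[j_1]}(x)\tilde\eta_{l,k,1}^{[j_2]}(x),
$$
and split it as a telescoping difference: write it as $\bigl(\tilde\vv_{l,k,0}^{[j_1]}-\tilde\vv_{l,k,1}^{[j_1]}\bigr)\tilde\eta_{l,k,0}^{[j_2]} + \tilde\vv_{l,k,1}^{[j_1]}\bigl(\tilde\eta_{l,k,0}^{[j_2]}-\tilde\eta_{l,k,1}^{[j_2]}\bigr)$. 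By Proposition~\ref{prop:Phi}, the first factor in each product is bounded by $C l^{j_1} w_{l,k}(x)\xi_l$ (resp.\ $C l^{j_2} w_{l,k}^{-1}(x)\xi_l$) and the second by $C l^{j_2} w_{l,k}^{-1}(x)$ (resp.\ $C l^{j_1} w_{l,k}(x)$), so in all cases $|\Theta_{l,k}^{[j_1,j_2]}(x)| \le C\, l^{j_1+j_2}\xi_l$, uniformly in $x$, because $w_{l,k}(x) w_{l,k}^{-1}(x) = 1$.

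For assertion~2, where $j_1+j_2 \le n-3$, this already gives $|\Theta_{l,k}^{[j_1,j_2]}(x)| \le C\, l^{n-3}\xi_l$, and since by hypothesis (S-4) the sequence $\{l^{n-2}\xi_l\}$ is in $l_2$ — hence a fortiori $\{l^{n-3}\xi_l\}$ is summable by Cauchy–Schwarz against $\{l^{-1}\}$ — the double series converges absolutely and uniformly with $\sum_{l,k}|\Theta_{l,k}^{[j_1,j_2]}(x)| \le C\sum_l l^{n-3}\xi_l \le C\bigl(\sum_l (l^{n-2}\xi_l)^2\bigr)^{1/2} = C\Omega$, using \eqref{defOmega}. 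That disposes of the uniform bound $\max_x|\mathscr T_{j_1,j_2}(x)|\le C\Omega$. One should also pass from quasi-derivatives back to ordinary derivatives via \eqref{zk}, \eqref{zn1} and \eqref{quasieta}; since $\tilde p_s$ and their relevant derivatives lie in $W_1^1[0,1]$ and are bounded, this only changes constants, and the same estimates transfer to the classical-derivative form of the series appearing in \eqref{defT}.

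For assertion~1, where $j_1+j_2 = n-2$, the pointwise bound degrades to $C\, l^{n-2}\xi_l$, which need not be summable — only square-summable — so a regularization is unavoidable. The idea is to extract from $\tilde\vv_{l,k,\eps}^{[j_1]}\tilde\eta_{l,k,\eps}^{[j_2]}$ its ``constant part'': replace each Weyl solution by its leading asymptotic exponential (the model term governing the $w_{l,k}$ behaviour), define $a_{j_1,j_2,l,k}$ as the resulting $x$-independent product, and show that the remainder $\Theta_{l,k}^{[j_1,j_2]}(x) - a_{j_1,j_2,l,k}$ gains an extra factor $l^{-1}$ (or extra decay in $|l-l_0|$), so it becomes $O(l^{n-3}\xi_l)$ in $L_2[0,1]$ and the regularized series converges there with norm $\le C\Omega$. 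The identity $a_{j_1,j_2,l,k} + a_{j_1+1,j_2-1,l,k} = 0$ should come out by direct inspection once the $a$'s are written down: shifting $[j_1]\mapsto[j_1+1]$, $[j_2]\mapsto[j_2-1]$ multiplies the leading exponential product by $l\cot(k\pi/n)$ on one factor and by $-l\cot(k\pi/n)$ on the other — or more transparently, the leading exponentials of $\tilde\vv$ and $\tilde\eta$ are reciprocal, so raising one quasi-derivative order and lowering the other flips an overall sign. Then in the summed pair $\Theta_{l,k}^{[j_1,j_2]} + \Theta_{l,k}^{[j_1+1,j_2-1]}$ the regularization constants cancel and convergence holds without it.

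\textbf{Main obstacle.} The routine part is the product/telescoping estimate; the delicate step is the construction of the regularization constants $a_{j_1,j_2,l,k}$ in part~1 and the proof that subtracting them buys the extra power of $l$ in the $L_2$-norm. This is exactly the point where one must use the \emph{sharp} form of Proposition~\ref{prop:Phi} (the difference estimates, not just the growth estimates) and carefully track cross terms between the $\eps=0$ and $\eps=1$ solutions, mirroring the treatment of formula (2.3.40) and Lemma~1.6.7 in \cite{Yur02}. The antisymmetry $a_{j_1,j_2,l,k}+a_{j_1+1,j_2-1,l,k}=0$ must be verified from the explicit formulas for the leading coefficients of $\tilde\Phi_{k+1}$ and $\tilde\Phi^{\star}_{n-k+1}$; since these are fixed by $\tilde p$ alone, a direct computation suffices, but it is the one place where a sign or index slip would be fatal.
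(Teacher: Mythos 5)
First, a point of reference: the paper does not prove this proposition itself --- it is imported from Lemma~8 of \cite{Bond22-alg}, with the remark that the constants $a_{j_1,j_2,l,k}$ are found explicitly in the proof given there. Measured against that standard argument, your part~2 is correct and complete: the telescoping split of $\tilde\vv_{l,k,0}^{[j_1]}\tilde\eta_{l,k,0}^{[j_2]} - \tilde\vv_{l,k,1}^{[j_1]}\tilde\eta_{l,k,1}^{[j_2]}$, the cancellation $w_{l,k}(x)\,w_{l,k}^{-1}(x)=1$, the resulting pointwise bound $C\,l^{j_1+j_2}\xi_l$, and the Cauchy--Schwarz step against $\{l^{-1}\}$ yielding $C\Omega$ are exactly the right ingredients when $j_1+j_2<n-2$.

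Part~1, however, contains a genuine gap. You claim that after subtracting $a_{j_1,j_2,l,k}$ the remainder ``gains an extra factor $l^{-1}$'' and becomes $O(l^{n-3}\xi_l)$, so that the regularized series converges by absolute summability of the $L_2$-norms. That cannot be the mechanism: if it were, the regularized series would converge absolutely and uniformly, and the statement would not need to retreat to $L_2$-convergence precisely in the critical case $j_1+j_2=n-2$ (compare $n=2$, where the analogous Gelfand--Levitan series $\sum_l c_l\cos(2\pi l x)$ with $\{c_l\}\in l_2$ converges in $L_2$ but not absolutely). What actually happens is that the leading part of each term splits into an $x$-independent piece --- which is $a_{j_1,j_2,l,k}$ --- plus oscillating exponential terms whose pointwise and $L_2$ sizes remain of order $l^{n-2}\xi_l$; only the lower-order corrections are $O(l^{n-3}\xi_l)$. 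The $L_2$-convergence of the sum of the surviving oscillating parts rests on their near-orthogonality, i.e.\ a Bessel-type bound $\|\sum_l c_l e_l\|_{L_2}^2\le C\sum_l|c_l|^2$ for a system of exponentials with separated frequencies; this is the substantive content of the proof of Lemma~8 in \cite{Bond22-alg} and is entirely absent from your sketch. Your heuristic for the antisymmetry $a_{j_1,j_2,l,k}+a_{j_1+1,j_2-1,l,k}=0$ (the leading exponentials of $\tilde\vv$ and $\tilde\eta$ are reciprocal, so raising one quasi-derivative order and lowering the other flips the sign of the constant part) is the right idea, but it requires the explicit leading asymptotics of $\tilde\Phi_{k+1}$ and $\tilde\Phi^{\star}_{n-k+1}$, which you do not write down. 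Finally, the parenthetical ``extra decay in $|l-l_0|$'' is out of place here: there is no second index $l_0$ in this series; that structure belongs to the kernel $\tilde R_{v_0,v}$, not to $\mathscr T_{j_1,j_2}$.
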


The constants $a_{j_1, j_2, l,k}$ are explicitly found in the proof of Lemma~8 in \cite{Bond22-alg}. However, we do not provide them here in order not to introduce many additional notations. Moreover, explicit formulas for $a_{j_1, j_2, l,k}$ are not needed in the proofs.

Below, similarly to the series in Proposition~\ref{prop:series}, we consider the series $T_{j_1, j_2}(x)$ with the brackets:
$$
T_{j_1,j_2}(x) = \sum_{l = 1}^{\infty} \sum_{k = 1}^{n-1} \bigl( \vv_{l,k,0}^{(j_1)}(x) \tilde \eta_{l,k,0}^{(j_2)}(x) - \vv_{l,k,1}^{(j_1)}(x) \tilde \eta^{(j_2)}_{l,k,1}(x) \bigr). 
$$

Moreover, we agree that we understand the summation of several series $T_{j_1, j_2}(x)$ (in particular, in \eqref{deft} and in \eqref{recp}) in the sense
\begin{equation} \label{sum}
\sum_{(l,k,\eps) \in V} b_{l,k,\eps} + \sum_{(l,k,\eps) \in V} c_{l,k,\eps} = \sum_{l = 1}^{\infty} \sum_{k = 1}^{n-1} (b_{l,k,0} + b_{l,k,1} + c_{l,k,0} + c_{l,k,1}).
\end{equation}

Using Lemmas~\ref{lem:vv}, \ref{lem:eta} and Proposition~\ref{prop:series}, we get the lemma on the convergence of $T_{j_1, j_2}(x)$.

\begin{lem} \label{lem:Tconv}
The following statements hold.
\begin{enumerate}
\item If $j_1 + j_2 = n-2$, then the series $T_{j_1,j_2}(x)$ converges in $L_2[0,1]$ with the regularization constants $a_{j_1, j_2, l,k}$ from Proposition~\ref{prop:series}, that is, the series
$$
T_{j_1,j_2}^{reg}(x) := \sum_{l = 1}^{\infty} \sum_{k = 1}^{n-1} \bigl( \vv_{l,k,0}^{(j_1)}(x) \tilde \eta_{l,k,0}^{(j_2)}(x) - \vv_{l,k,1}^{(j_1)}(x) \tilde \eta^{(j_2)}_{l,k,1}(x) - a_{j_1,j_2,l,k}\bigr)
$$
converges in $L_2[0,1]$. Moreover, 
\begin{equation} \label{estTreg}
    \| T_{j_1,j_2}^{reg}(x) \|_{L_2[0,1]} \le C \Omega.
\end{equation}
\item If $j_1 + j_2 = n-2-s$ and $s \in \{ 1, 2, \dots, n-2\}$, then $T_{j_1,j_2}(x)$ converges in $W_2^s[0,1]$ and $\| T_{j_1,j_2}(x) \|_{W_2^s[0,1]} \le C \Omega$.
\end{enumerate}
\end{lem}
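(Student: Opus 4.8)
The plan is to deduce Lemma~\ref{lem:Tconv} from Proposition~\ref{prop:series} by decomposing each term of $T_{j_1,j_2}(x)$ into a ``model'' part, which is already handled by Proposition~\ref{prop:series} (with the same regularization constants $a_{j_1,j_2,l,k}$), plus correction terms involving the differences $\vv_{l,k,\eps} - \tilde\vv_{l,k,\eps}$, whose convergence will be established directly using the estimates of Lemmas~\ref{lem:vv} and~\ref{lem:eta}. Concretely, for each $(l,k)$ I would write
\begin{align*}
\vv_{l,k,0}^{(j_1)}\tilde\eta_{l,k,0}^{(j_2)} - \vv_{l,k,1}^{(j_1)}\tilde\eta_{l,k,1}^{(j_2)}
&= \bigl(\tilde\vv_{l,k,0}^{(j_1)}\tilde\eta_{l,k,0}^{(j_2)} - \tilde\vv_{l,k,1}^{(j_1)}\tilde\eta_{l,k,1}^{(j_2)}\bigr) \\
&\quad + (\vv_{l,k,0}^{(j_1)} - \tilde\vv_{l,k,0}^{(j_1)})\tilde\eta_{l,k,0}^{(j_2)} - (\vv_{l,k,1}^{(j_1)} - \tilde\vv_{l,k,1}^{(j_1)})\tilde\eta_{l,k,1}^{(j_2)},
\end{align*}
and then further split the last line, adding and subtracting $(\vv_{l,k,0}^{(j_1)} - \tilde\vv_{l,k,0}^{(j_1)})\tilde\eta_{l,k,1}^{(j_2)}$, into a term proportional to $\tilde\eta_{l,k,0}^{(j_2)} - \tilde\eta_{l,k,1}^{(j_2)}$ and a term proportional to the ``second difference'' $\vv_{l,k,0}^{(j_1)} - \vv_{l,k,1}^{(j_1)} - \tilde\vv_{l,k,0}^{(j_1)} + \tilde\vv_{l,k,1}^{(j_1)}$. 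The first line is exactly the summand of the series in Proposition~\ref{prop:series}, so it contributes $\mathscr T_{j_1,j_2}^{reg}(x)$ (or $\mathscr T_{j_1,j_2}(x)$), which converges in the required space with norm $\le C\Omega$ and carries the regularization constants when $j_1+j_2 = n-2$.

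For the correction terms I would estimate, using Lemma~\ref{lem:vv} and Lemma~\ref{lem:eta},
$$
\bigl| (\vv_{l,k,\eps}^{(j_1)} - \tilde\vv_{l,k,\eps}^{(j_1)})\,\tilde\eta_{l,k,\eps}^{(j_2)}(x) \bigr| \le C\Omega\, l^{j_1 - 1}\, l^{j_2}\, w_{l,k}(x) w_{l,k}^{-1}(x) = C\Omega\, l^{j_1+j_2-1},
$$
and for the second-difference term the gain of an extra $\xi_l$ factor. Since $j_1+j_2 = n-2-s$, the first type of term is bounded by $C\Omega\, l^{n-3-s}$; after $s$ formal differentiations (which by Lemma~\ref{lem:vv} and Lemma~\ref{lem:eta} cost at most $l^{s}$ in the estimates, since all quasi-derivative-to-derivative conversions involve bounded coefficients $\tilde p_k^{(j)}\in W_1^1[0,1]$) the summand is bounded by $C\Omega\, l^{n-3}$, which is not yet summable. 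The point is that for $j_1 = 0$ (the case needed in \eqref{recp}, where only $T_{0,\cdot}$ appears) the difference $\vv_{l,k,\eps} - \tilde\vv_{l,k,\eps}$ does \emph{not} carry the $l^{-1}$ but rather a full $\Omega w_{l,k}\chi_l$ and, crucially, the terms come in $\eps = 0,1$ pairs whose \emph{difference} enjoys the extra $\chi_l$ (Cauchy--Schwarz in $l$ against $\{\chi_l\}\in l_2$) — more precisely, I would keep $j_1 = 0$ throughout and use the sharper bounds $|\vv_{l,k,\eps} - \tilde\vv_{l,k,\eps}| \le C\Omega w_{l,k}\chi_l$ together with $|\vv_{l,k,0} - \vv_{l,k,1} - \tilde\vv_{l,k,0} + \tilde\vv_{l,k,1}| \le C\Omega w_{l,k}\chi_l\xi_l$, the $\chi_l$-factor providing $l_2$-summability after a Cauchy--Schwarz argument, exactly as in the proof of Lemma~8 in \cite{Bond22-alg}. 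Differentiating $s$ times and pairing the $\eps$-terms to telescope the leading contributions, the corrected series converges in $W_2^s[0,1]$ with norm $\le C\Omega$.

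The heart of the matter, and the step I expect to be the main obstacle, is the case $j_1 + j_2 = n-2$ (part~1): here the individual series genuinely diverge and convergence holds only in $L_2$ after subtracting the regularization constants $a_{j_1,j_2,l,k}$, and one must verify that the \emph{same} constants from Proposition~\ref{prop:series} work for $T_{j_1,j_2}$ — i.e.\ that the correction terms (built from $\vv - \tilde\vv$) converge in $L_2$ \emph{without} needing any new regularization. This follows because each correction summand already carries a factor $\Omega\chi_l$ (or $\Omega\chi_l\xi_l$), hence is $l_2$-summable in $l$ uniformly in $x$, so it contributes an $L_2$-convergent series of norm $\le C\Omega$; consequently $T_{j_1,j_2}^{reg}(x) = \mathscr T_{j_1,j_2}^{reg}(x) + (\text{$L_2$-convergent corrections})$ and the bound $\|T_{j_1,j_2}^{reg}\|_{L_2[0,1]} \le C\Omega$ follows by the triangle inequality. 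For part~2 the argument is the same but simpler: everything converges absolutely/uniformly, the $s$-fold differentiation is legitimate termwise, and summing the resulting $W_2^s$-norm estimates against $\{\chi_l\}\in l_2$ (together with the $\mathscr T$-bound $\le C\Omega$ from Proposition~\ref{prop:series}) gives $\|T_{j_1,j_2}\|_{W_2^s[0,1]} \le C\Omega$. Throughout, I would invoke the convention \eqref{sum} to legitimately regroup the $V$-indexed sums into the $(l,k)$-indexed brackets on which all the estimates are phrased.
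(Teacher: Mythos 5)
Your overall strategy --- splitting each summand into the model term handled by Proposition~\ref{prop:series} plus corrections built from $\vv-\tilde\vv$, and pairing the $\eps=0,1$ terms so that each correction piece is either (first difference of $\vv$)$\times$($\eps$-difference of $\tilde\eta$) or (second difference of $\vv$)$\times\tilde\eta$ --- is exactly the paper's. But there are two genuine gaps. First, your treatment of the corrections goes astray for $j_1>0$: you bound them by $C\Omega\, l^{j_1+j_2-1}$, declare the result non-summable after differentiation, and propose to ``keep $j_1=0$ throughout''. That restriction is not available: the lemma is stated for all $j_1,j_2$, the quantities $t_{r,s}$ in \eqref{deft} involve $T_{j_1,j_2}$ with $j_1>0$, and even $T_{0,j}'=T_{1,j}+T_{0,j+1}$ forces $j_1>0$ after one differentiation. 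The fix is already implicit in your own splitting: \emph{both} resulting pieces carry a factor $\xi_l$ (from $|\tilde\eta_{l,k,0}^{(j_2)}-\tilde\eta_{l,k,1}^{(j_2)}|\le C l^{j_2} w_{l,k}^{-1}\xi_l$ and from the second difference of $\vv$, respectively), so for $j_1>0$ the summand is $O(\Omega\, l^{j_1+j_2-1}\xi_l)=O(\Omega\, l^{n-3}\xi_l)$, which is summable by (S-4) and Cauchy--Schwarz; the $\chi_l$ gain is needed only in the case $j_1=0$, where Lemma~\ref{lem:vv} provides no $l^{-1}$. This two-case estimate ($j_1=0$ versus $j_1>0$) is precisely how the paper argues.

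Second, in part~2 the claim that ``everything converges absolutely/uniformly'' so that ``$s$-fold differentiation is legitimate termwise'' fails at the top order: the $s$-th derivative of $T_{j_1,j_2}$ with $j_1+j_2=n-2-s$ is a sum of series of total order $n-2$, each of which converges only after subtracting regularization constants. The paper's key step is that in $T'_{j_1,j_2}=T_{j_1+1,j_2}+T_{j_1,j_2+1}$ these constants cancel, $a_{j_1+1,j_2,l,k}+a_{j_1,j_2+1,l,k}=0$ (Proposition~\ref{prop:series}), so the derivative converges in $L_2$ without regularization; without invoking this cancellation you cannot conclude $T_{j_1,j_2}\in W_2^s[0,1]$. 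A smaller omission: Proposition~\ref{prop:series} is phrased in quasi-derivatives, so identifying your model part with $\mathscr T_{j_1,j_2}^{reg}$ (with the \emph{same} constants $a_{j_1,j_2,l,k}$) requires the inductive conversion $\tilde\eta^{[j_2]}\to\tilde\eta^{(j_2)}$ via \eqref{quasieta}, the discrepancy being a combination of absolutely convergent series of lower total order.
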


\begin{proof}
Observe that, for $j_1 + j_2 \le n-2$, we have $\tilde \vv_{l,k,\eps}^{[j_1]}(x) = \tilde \vv_{l,k,\eps}^{(j_1)}(x)$ and $\tilde \eta_{l,k,\eps}^{[j_2]}(x)$ satisfies the relation analogous to \eqref{quasieta}: 
\begin{equation*} 
\tilde \eta_{l,k,\eps}^{[j_2]}(x) = \tilde \eta_{l,k,\eps}^{(j_2)}(x) + \sum_{j = 0}^{\nu - 2} \left( \sum_{s = j}^{\nu-2} (-1)^{s + \nu} C_s^j \tilde p_{n-\nu + s}^{(s-j)}(x)\right) \tilde \eta_{l,k,\eps}^{(j)}(x).
\end{equation*}

Therefore, using the induction by $j_1 + j_2 = 0, 1, \dots, n-2$, we get from Proposition~\ref{prop:series} that, for $j_1 + j_2 < n-2$, the series
$$
\tilde T_{j_1,j_2}(x) := \sum_{l = 1}^{\infty} \sum_{k = 1}^{n-1} (\tilde \vv_{l,k,0}^{(j_1)}(x) \tilde \eta_{l,k,0}^{(j_2)}(x) - \tilde \vv_{l,k,1}^{(j_1)} \tilde \eta_{l,k,1}^{(j_2)}(x))
$$
converges absolutely and uniformly on $[0,1]$ and, for $j_1 + j_2 = n-2$, it converges in $L_2[0,1]$ with regularization, in other words, the series
$$
\tilde T_{j_1,j_2}^{reg}(x) := \sum_{l = 1}^{\infty} \sum_{k = 1}^{n-1} (\tilde \vv_{l,k,0}^{(j_1)}(x) \tilde \eta_{l,k,0}^{(j_2)}(x) - \tilde \vv_{l,k,1}^{(j_1)}(x) \tilde \eta_{l,k,1}^{(j_2)}(x) - a_{j_1,j_2,l,k}) 
$$
converges in $L_2[0,1]$. The regularization constants $a_{j_1,j_2,l,k}$ are the same as in Proposition~\ref{prop:series}, because the difference $\mathscr T_{j_1,j_2}^{reg}(x) - \tilde T_{j_1,j_2}^{reg}(x)$ for $j_1 + j_2 = n-2$ can be represented as a linear combination of several series $T_{i_1,i_2}(x)$ with lower powers ($i_1 + i_2 < n - 2$), which converge absolutely and uniformly on $[0,1]$. Addition and subtraction of series throughout this proof are understood in the sense \eqref{sum}. Moreover, the corresponding estimates hold:
\begin{align} \label{estT1}
\max_{x \in [0,1]} | \tilde T_{j_1,j_2}(x) | \le C \Omega, & \quad j_1 + j_2 < n - 2, \\ \label{estT2}
\| \tilde T_{j_1,j_2}^{reg}(x) \|_{L_2[0,1]} \le C \Omega, & \quad j_1 + j_2 = n-2.
\end{align}

Consider the series
\begin{align*}
T_{j_1,j_2}(x) - \tilde T_{j_1, j_2}(x) = & \sum_{(l,k,\eps) \in V} (\vv_{l,k,\eps}^{(j_1)}(x) - \tilde \vv_{l,k,\eps}^{(j_1)}(x)) \tilde \eta_{l,k,\eps}^{(j_2)}(x) \\
 = & \sum_{l = 1}^{\infty} \sum_{k = 1}^{n-1} (\vv_{l,k,0}^{(j_1)}(x) - \vv_{l,k,1}^{(j_1)}(x) - \tilde \vv_{l,k,0}^{(j_1)}(x) + \tilde \vv_{l,k,1}^{(j_1)}(x)) \tilde \eta_{l,k,0}^{(j_2)}(x) \\
 & + \sum_{l = 1}^{\infty} \sum_{k = 1}^{n-1} (\vv_{l,k,1}^{(j_1)}(x) - \tilde \vv_{l,k,1}^{(j_1)}(x)) (\tilde \eta_{l,k,0}^{(j_2)}(x) - \tilde \eta_{l,k,1}^{(j_2)}(x)).
\end{align*}

Let us apply the estimates of Lemmas~\ref{lem:vv} and~\ref{lem:eta} for $\vv_{l,k,\eps}^{(j_1)}(x)$ and $\tilde \eta_{l,k,\eps}^{(j_2)}(x)$, respectively. We have the two cases:
\begin{align*}
j_1 = 0 \colon \quad & |T_{j_1,j_2}(x) - \tilde T_{j_1,j_2}(x)| \le C \Omega \sum_{l = 1}^{\infty} l^{j_2} \xi_l \chi_l, \\
j_1 > 0 \colon \quad & |T_{j_1,j_2}(x) - \tilde T_{j_1,j_2}(x)| \le C \Omega \sum_{l = 1}^{\infty} l^{j_1 + j_2 - 1} \xi_l.
\end{align*}

Since $\{ l^{n-2} \xi_l \} \in l_2$ and $\{ \chi_l \} \in l_2$, we conclude that the series $T_{j_1,j_2}(x) - \tilde T_{j_1,j_2}(x)$ in both cases converges absolutely and uniformly on $[0,1]$ for $j_1 + j_2 \le n-2$. Moreover, for the difference $T_{j_1,j_2}(x) - \tilde T_{j_1,j_2}(x)$, the estimates similar to \eqref{estT1} and \eqref{estT2} hold. For $j_1 + j_2 = n-2$, this readily implies the convergences of the series
$$
T_{j_1,j_2}^{reg}(x) = \tilde T_{j_1,j_2}^{reg}(x) + (T_{j_1,j_2}(x) - \tilde T_{j_1,j_2}(x))
$$
in $L_2[0,1]$ and the estimate \eqref{estTreg}.

Now, let $j_1 + j_2 = n - 3$. Formal differentiation together with the summation rule \eqref{sum} imply 
$$
T'_{j_1,j_2}(x) = T_{j_1+1,j_2}(x) + T_{j_1,j_2 + 1}(x).
$$
As we have already shown, the series $T_{j_1+1,j_2}(x)$ and $T_{j_1,j_2 + 1}(x)$ converge in $L_2[0,1]$ with regularization and satisfy the estimate \eqref{estTreg}. Moreover, for their sum, the regularization constants vanish: $a_{j_1 + 1, j_2,l,k} + a_{j_1,j_2 + 1, l,k} = 0$, so the series $T'_{j_1,j_2}(x)$ converges in $L_2[0,1]$ without regularization. Consequently, $T_{j_1,j_2} \in W_2^1[0,1]$ and
\begin{align*}
\| T_{j_1,j_2}(x) \|_{W_2^1[0,1]} & \le \| T_{j_1,j_2}(x) \|_{L_2[0,1]} + \| T_{j_1,j_2}'(x) \|_{L_2[0,1]} \\ & \le \max_{x \in [0,1]} |T_{j_1,j_2}(x)| + \| T_{j_1+1,j_2}^{reg}(x) \|_{L_2[0,1]} + \| T_{j_1, j_2 + 1}^{reg}(x) \|_{L_2[0,1]} \le C \Omega. 
\end{align*}

Thus, the lemma is already proved for $j_1 + j_2 = n-2$ and $j_1 + j_2 = n-3$. By induction, we complete the proof for $j_1 + j_2 = n-4, \dots, 1, 0$.
\end{proof}

Thus, we are ready to investigate the convergence of the series in the reconstruction formulas \eqref{recp}.

\begin{lem} \label{lem:convp}
The reconstruction formulas \eqref{recp} define the functions $p_s$ of the corresponding spaces $W_2^{s-1}[0,1]$ for $s = \overline{0,n-2}$ and
$\| p_s(x) - \tilde p_s(x) \|_{W_2^{s-1}[0,1]} \le C \Omega$.
\end{lem}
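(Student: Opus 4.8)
The plan is to prove Lemma~\ref{lem:convp} by downward induction on $s$ from $s = n-2$ to $s = 0$, working through the reconstruction formula \eqref{recp} term by term. The key observation is that \eqref{recp} expresses $p_s - \tilde p_s$ as a finite sum of three types of contributions: (i) the ``leading'' term $t_{n,s}(x) + (-1)^{n-s} T_{0,n-s-1}(x)$, where by \eqref{deft} $t_{n,s}$ is a finite linear combination of the series $T_{j_1,j_2}$ with $j_1 + j_2 = n - s - 1 < n - 2$ when $s \ge 1$ (and $= n-2$ when $s = 0$, after recombining with the $T_{0,n-1}$-type term via the quasi-derivative structure); (ii) the ``lower-order smoothing'' term $\sum_{j,r} (-1)^r C_r^j \tilde p_{r+s+1}^{(r-j)}(x) T_{0,j}(x)$, where $T_{0,j}$ has $j \le n-s-3$ and $\tilde p_{r+s+1}^{(r-j)} \in W_2^{s}[0,1]$ or better; and (iii) the feedback term $-\sum_{r = s+1}^{n-2} p_r(x) t_{r,s}(x)$, which involves the already-constructed higher-index coefficients $p_r$ with $r > s$.

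First I would record the target smoothness: we must show $p_s - \tilde p_s \in W_2^{s-1}[0,1]$ with norm $\le C\Omega$. For $s \ge 1$ this is an honest function space; for $s = 0$ it is the distribution space $W_2^{-1}[0,1]$, so the term $T_{0,n-1}$ (which does not converge in $L_2$ but only after one formal integration) must be handled by recognizing that it appears only through its antiderivative, matching the definition of the $W_2^{-1}$-norm via $\sup_c \|(\cdot)^{(-1)} + c\|_{L_2}$. Next I would invoke Lemma~\ref{lem:Tconv}: for $j_1 + j_2 = n - 2 - \sigma$ with $\sigma \ge 1$, the series $T_{j_1,j_2}$ converges in $W_2^{\sigma}[0,1]$ with norm $\le C\Omega$; for $j_1 + j_2 = n-2$ it converges in $L_2$ after regularization; and one further formal integration (as in the $s=0$ case) lands it in $W_2^{-1}$. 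Reading off the indices in \eqref{deft}, the terms $T_{r-u-1,u-s}$ entering $t_{r,s}$ have $j_1 + j_2 = r - 1 - s$, so $t_{r,s} \in W_2^{n-2-(r-1-s)}[0,1] = W_2^{n-1-r+s}[0,1]$; in particular $t_{n,s} \in W_2^{s-1}[0,1]$, exactly the target space, and $t_{r,s}$ for $r < n$ lies in a strictly better space $W_2^{s-1+(n-r)}[0,1]$.

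The main work is then the bookkeeping for the feedback term (iii). I would argue by the downward induction hypothesis that $p_r - \tilde p_r \in W_2^{r-1}[0,1]$ with $\|p_r - \tilde p_r\|_{W_2^{r-1}} \le C\Omega$ for every $r > s$; since also $\tilde p_r \in W_2^{r-1}[0,1]$ by (W-1), the product $p_r \cdot t_{r,s}$ is a product of a $W_2^{r-1}$-function with a $W_2^{s-1+(n-r)}$-function, and one checks (using that $r - 1 \ge 1$ for $r \ge 2$, and the Banach-algebra / Sobolev multiplication estimates $W_2^a \cdot W_2^b \subset W_2^{\min(a,b)}$ when $\max(a,b) \ge 1$, with the borderline cases $a$ or $b$ equal to $0$ or $-1$ handled by the explicit norm definitions and the fact that the other factor is then at least $W_2^1 \subset C[0,1]$) that the product lands in $W_2^{s-1}[0,1]$ with the right bound. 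Writing $p_r t_{r,s} - \tilde p_r \tilde t_{r,s} = (p_r - \tilde p_r) t_{r,s} + \tilde p_r (t_{r,s} - \tilde t_{r,s})$ and noting $\tilde t_{r,s}$ is the $\tilde p$-analogue (built from $\tilde T_{j_1,j_2}$, which is the $\Omega$-independent part already isolated in the proof of Lemma~\ref{lem:Tconv}), each piece is $O(\Omega)$ in $W_2^{s-1}$. The same splitting handles term (ii). Assembling the finitely many terms of \eqref{recp} with the triangle inequality gives $p_s - \tilde p_s \in W_2^{s-1}[0,1]$ and $\|p_s - \tilde p_s\|_{W_2^{s-1}} \le C\Omega$, completing the induction step.

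I expect the main obstacle to be the careful treatment of the two borderline Sobolev indices $s = 1$ (target $W_2^0 = L_2$) and $s = 0$ (target $W_2^{-1}$): here the leading series $T_{0,n-1}$ or $T_{0,n-2}$ only converge after regularization or after a formal integration, so one must verify that the regularization constants $a_{j_1,j_2,l,k}$ that appear are exactly the ones that cancel in the combinations dictated by \eqref{recp} — this is the content of the relation $a_{j_1,j_2,l,k} + a_{j_1+1,j_2-1,l,k} = 0$ from Proposition~\ref{prop:series}, already exploited in Lemma~\ref{lem:Tconv}, but it must be re-traced through the coefficient pattern $t_{n,s}(x) + (-1)^{n-s}T_{0,n-s-1}(x)$ and through the products with $\tilde p$'s. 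A secondary subtlety is that in the feedback term the multiplier $t_{r,s}$ is itself a series whose convergence is only in a Sobolev space, so ``multiply by $p_r$'' must be justified as a continuous bilinear map on the relevant spaces rather than term-by-term; this is where the Sobolev multiplication lemmas do the real work, and one should state the precise form $W_2^a[0,1] \cdot W_2^b[0,1] \hookrightarrow W_2^{\min(a,b)}[0,1]$ for $a+b \ge 0$, $\max(a,b)\ge 1$ (with the degenerate cases spelled out) before invoking it.
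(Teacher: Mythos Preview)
Your overall architecture matches the paper's proof exactly: downward induction on $s$, the same three-term splitting of \eqref{recp}, Sobolev multiplication for the lower-order terms, and the recognition that $s\in\{0,1\}$ are the delicate cases. Terms (ii) and (iii) are handled essentially as you describe (the paper is actually simpler here: it just uses $p_r\in W_2^{r-1}\subseteq W_2^s$ and $t_{r,s}\in W_2^{s+1}$ directly, so the product lands in $W_2^s\subset W_2^{s-1}$; there is no need to introduce a $\tilde t_{r,s}$ and split $p_r t_{r,s}-\tilde p_r\tilde t_{r,s}$, since $t_{r,s}$ itself already carries the factor $\Omega$ by Lemma~\ref{lem:Tconv}).

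The genuine gap is in your treatment of the leading term $\mathscr S_1:=t_{n,s}+(-1)^{n-s}T_{0,n-s-1}$. Your claim that the constituent series $T_{j_1,j_2}$ in $t_{n,s}$ have $j_1+j_2=n-s-1<n-2$ for $s\ge 1$ is off by one (equality holds at $s=1$), and for $s=0$ the index sum is $n-1$, which is \emph{not covered at all} by Lemma~\ref{lem:Tconv}. Your suggestion to ``formally integrate once'' is the right instinct, but you never say what makes such an antiderivative exist. The paper supplies the missing mechanism: writing $\mathscr S_1=\sum_{j=0}^{n-s-1} b_j\,T_{n-s-1-j,j}$, one checks a combinatorial identity on the coefficients $b_j=C_n^{j+s+1}C_{j+s}^s+(-1)^{n-s}\delta_{j,n-s-1}$ which, together with the termwise relation $T'_{j_1,j_2}=T_{j_1+1,j_2}+T_{j_1,j_2+1}$, allows one to rewrite
\[
\mathscr S_1=\frac{d}{dx}\Bigl(\sum_{j=0}^{n-s-2} d_j\,T_{n-s-2-j,j}\Bigr),\qquad d_j=\sum_{i=0}^j(-1)^{j-i}b_i.
\]
Now the bracketed sum has index sum $n-s-2$, so Lemma~\ref{lem:Tconv} applies directly (it lies in $W_2^s$ for $s\ge 1$, and for $s=0$ in $L_2$ after regularization---but differentiation kills the constants $a_{j_1,j_2,l,k}$ anyway). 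Taking the derivative then drops one order and gives $\mathscr S_1\in W_2^{s-1}$ with $\|\mathscr S_1\|_{W_2^{s-1}}\le C\Omega$ for \emph{all} $s$ in one stroke. The pairwise relation $a_{j_1,j_2,l,k}+a_{j_1+1,j_2-1,l,k}=0$ that you cite is used inside Lemma~\ref{lem:Tconv}, but on its own it does not close the argument for $\mathscr S_1$; you need the extra cancellation among the $b_j$'s to get the total-derivative structure.
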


\begin{proof}
We prove the lemma by induction. Fix $s \in \{ 0, 1, \dots, n-2 \}$. Suppose that the assertion of the lemma is already proved for all $s_1 > s$.
Due to \eqref{deft}, we have
\begin{align*}
\mathscr S_1(x) & := t_{n,s}(x) + (-1)^{n-s} T_{0,n-s-1}(x) = \sum_{j = 0}^{n-s-1} b_j T_{n-s-1-j, j}(x) \\ & = \frac{d}{dx}\left( \sum_{j = 0}^{n-s-2} d_j T_{n-s-2-j,j}(x) \right),
\end{align*}
where
\begin{gather*}
b_j := C_n^{j+s+1} C_{j+s}^s + (-1)^{n-s} \de_{j,n-s-1}, \quad \sum\limits_{j = 0}^{n-s-1} b_j = 0, \\
d_j := \sum_{i = 0}^j (-1)^{j-i} b_i, \quad j = \overline{0,n-s-2}.
\end{gather*}

By virtue of Lemma~\ref{lem:Tconv}, the series $T_{n-s-2-j,j}$ for $j = \overline{0,n-s-2}$ belong to $W_2^s[0,1]$ for $s \ge 1$ and converge with regularization in $L_2[0,1]$ for $s = 0$. In both cases, we conclude that $\mathscr S_1 \in W_2^{s-1}[0,1]$. Furthermore, $\| \mathscr S_1(x)\|_{W_2^{s-1}[0,1]} \le C \Omega$.

Consider the next term in \eqref{recp}:
$$
\mathscr S_2(x) := \sum_{j = 0}^{n-s-3} \sum_{r = j}^{n-s-3} (-1)^r C_r^j \tilde p_{r+s+1}^{(r-j)}(x) T_{0,j}(x).
$$

In view of $p_k \in W_2^{k-1}[0,1]$ and Lemma~\ref{lem:Tconv}, we have
\begin{align*}
& \tilde p_{r+s+1}^{(r-j)} \in W_2^{s+j}[0,1] \subseteq W_2^s[0,1], \\
& T_{0,j} \in W_2^{n-j-2}[0,1] \subseteq W_2^{s+1}[0,1].
\end{align*}
Hence $\mathscr S_2 \in W_2^s[0,1]$. 

For the last term
$$
\mathscr S_3(x) := -\sum_{r = s+1}^{n-2} p_r(x) t_{r,s}(x),
$$
we have $p_r \in W_2^{r-1}[0,1]$ by the induction hypothesis, so $p_r \in W_2^s[0,1]$, and $t_{r,s} \in W_2^{s+1}[0,1]$ according to \eqref{deft} and Lemma~\ref{lem:Tconv}. Hence $\mathscr S_3 \in W_2^s[0,1]$. Lemma~\ref{lem:Tconv} also implies $\| \mathscr S_j(x) \|_{W_2^s[0,1]} \le C \Omega$ for $j = 2, 3$.

Since $p_s(x) = \tilde p_s(x) + \mathscr S_1(x) + \mathscr S_2(x) + \mathscr S_3(x)$, we arrive at the assertion of the lemma.
\end{proof}

Thus, we have obtained the vector $p = (p_k)_{k = 0}^{n-2}$ by the reconstruction formulas. It remains to prove the following lemma.

\begin{lem} \label{lem:sd}
The spectral data of $p$ coincide with $\{ \la_{l,k}, \be_{l,k} \}_{l \ge 1, \, k = \overline{1,n-1}}$.
\end{lem}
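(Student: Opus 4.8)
The plan is to reverse the formal derivation of Section~\ref{sec:rec}: we start from the $p=(p_k)$ produced by the reconstruction formulas \eqref{recp} (which by Lemma~\ref{lem:convp} lies in the right smoothness classes) and show that the functions $\vv_{l,k,\eps}(x)$ recovered from the solution $\psi(x)$ of the main equation are in fact the Weyl solutions $\Phi_{k+1}(x,\la_{l,k,\eps})$ of the equation $\ell_n(y)=\la y$ with these reconstructed coefficients. First I would introduce $\Phi_{k_0}(x,\la)$ for $k_0=\overline{1,n}$ by the series \eqref{Phik0} and, using Lemmas~\ref{lem:vv}, \ref{lem:eta} and~\ref{lem:Tconv} together with the estimates of Proposition~\ref{prop:Phi}, verify that these series converge (with their quasi-derivatives, in the appropriate sense) and that $\Phi_{k_0}(\cdot,\la)\in\mathcal D_F$ for the reconstructed $F$. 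The computation in Section~\ref{sec:rec} was purely formal; here one must check that each term-by-term application of $\ell_n$ is legitimate, which is exactly what the convergence lemmas of this section were set up to guarantee. The outcome of this step is that $\ell_n(\Phi_{k_0})=\la\Phi_{k_0}$ and hence $\vv_{l,k,\eps}(x)=\Phi_{k+1}(x,\la_{l,k,\eps})$ are genuine solutions of \eqref{eqv}.

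Next I would identify the analytic structure of the matrix built from these $\Phi_{k_0}$. By construction \eqref{Phik0} the functions $\Phi_{k_0}(x,\la)$ satisfy $\Phi_{k_0}^{[j-1]}(0,\la)=\de_{k_0,j}$ for $j\le k_0$ (the boundary conditions at $x=0$ are inherited from $\tilde\Phi_{k_0}$ because the correction terms vanish to the required order at $0$, as the Lagrange brackets $\langle\tilde\Phi^{\star}_{n-k+1},\tilde\Phi_{k_0+1}\rangle$ do), so the $\Phi_{k_0}$ are the Weyl solutions of the reconstructed operator provided we also check that $\Phi_{k_0}^{[s-1]}(1,\la)=0$ for $s=\overline{1,n-k_0}$. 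The cleanest route is to introduce the matrix $M(\la)$ defined through $\Phi(x,\la)=\mathcal C(x,\la)M(\la)$ with $\mathcal C$ the solution matrix of the reconstructed equation, show from \eqref{Phik0} that $M(\la)$ is unit lower triangular and meromorphic with poles only at the points $\la_{l,k}$, and compute its Laurent coefficients at $\la_{l,k}$ directly from \eqref{Phik0}: the residue is governed by the term $(l,k,\eps)$ with $\eps=0$, and a short residue calculation shows $M_{\langle -1\rangle}(\la_{l,k})$ and $M_{\langle 0\rangle}(\la_{l,k})$ combine so that the weight matrix $\mathcal N(\la_{l,k})=(M_{\langle 0\rangle})^{-1}M_{\langle -1\rangle}$ equals $\be_{l,k}E_{k+1,k}$. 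This uses the separation hypothesis (S-2)/(W-3) to kill off-diagonal contributions and the nonvanishing (S-3) of $\be_{l,k}$; one also needs that the $\la_{l,k}$ are precisely the zeros of the reconstructed $\Delta_{k,k}(\la)$, which follows once the pole/residue structure of $M$ is pinned down.

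From the Laurent coefficients one reads off that $\{\la_{l,k},\be_{l,k}\}$ are exactly the spectral data of the reconstructed $p$, via the characterization $\be_{l,k}=\mathcal N_{k+1,k}(\la_{l,k})$ of Section~\ref{sec:sd}; the simplicity (S-1) of the eigenvalues together with the pole structure ensures there are no extra eigenvalues and no missing ones, and Corollary~\ref{cor:uniq} gives uniqueness of the $p\in W$ with these data. I would also note that, since $\tilde\psi$ and $\tilde R$ were built from $\{\la_{l,k},\be_{l,k}\}$ and $\tilde p$, consistency of the whole scheme (the fact that the $\vv_{l,k,\eps}$ so obtained do satisfy the linear system \eqref{infphi} with the reconstructed coefficients) is what makes the residue computation close up; this is the analogue of the corresponding verification in \cite[Section~2.5]{Yur02} and in \cite{Bond22-alg}.

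The main obstacle I anticipate is the bookkeeping in the residue/Laurent computation for $M(\la)$ at $\la=\la_{l,k}$: one must track how the pole of $\Phi_{k+1}(x,\la)$ at $\la_{l,k}$ (with residue proportional to $\vv_{l,k,0}$) interacts with the explicit $(\la-\la_{l,k,0})^{-1}$ factor in \eqref{Phik0}, confirm that the resulting $M_{\langle -1\rangle}$ has rank one supported on entry $(k+1,k)$, and extract the scalar $\be_{l,k}$ correctly — all while using (W-2), (W-3) to discard the contributions of the other indices. A secondary technical point is justifying that the boundary conditions at $x=1$ transfer correctly, i.e.\ that the reconstructed $\Phi_{k_0}$ really are the Weyl solutions rather than merely solutions with the right initial data at $0$; this again reduces to the analytic properties of $M(\la)$ established above. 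Once these are in hand, the lemma follows.
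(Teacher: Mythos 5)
Your proposal takes a genuinely different route from the paper, and it contains a gap at its central step. The paper does \emph{not} verify directly that the infinite series \eqref{Phik0} defines Weyl solutions of the reconstructed equation. Instead it truncates the spectral data at level $N$ (replacing $\la_{l,k},\be_{l,k}$ by $\tilde\la_{l,k},\tilde\be_{l,k}$ for $l>N$), so that all series in \eqref{Phik0} and \eqref{recp} become \emph{finite} sums; for finite sums the direct calculation you describe (checking that $\Phi_{k_0}^N$ solve the equation with coefficients $p^N$, identifying the pole and residue structure of $M^N(\la)$, and reading off the weight numbers) is legitimate and yields that $\{\la_{l,k}^N,\be_{l,k}^N\}$ are the spectral data of $p^N$. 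The lemma then follows by showing $p_k^N\to p_k$ in $W_2^{k-1}[0,1]$ and invoking continuous dependence of the spectral data on the coefficients.

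The gap in your version is the assertion that ``each term-by-term application of $\ell_n$ is legitimate, which is exactly what the convergence lemmas of this section were set up to guarantee.'' They were not. Lemmas~\ref{lem:vv} and~\ref{lem:eta} control $\vv_{l,k,\eps}$ and $\tilde\eta_{l,k,\eps}$ only up to order $n-2$; the functions $\vv_{l,k,\eps}$ are merely $C^{n-2}[0,1]$, and Lemma~\ref{lem:Tconv} shows that already at total order $j_1+j_2=n-2$ the relevant series converge only in $L_2$ and only after subtracting regularization constants $a_{j_1,j_2,l,k}$. To conclude $\Phi_{k_0}(\cdot,\la)\in\mathcal D_F$ and $\ell_n(\Phi_{k_0})=\la\Phi_{k_0}$ you would need to control the $(n-1)$-th and $n$-th \emph{quasi-derivatives} of the series \eqref{Phik0} with respect to the reconstructed associated matrix $F$ (whose entries involve the distributional coefficient $p_0=\sigma'$), and to justify interchanging $\ell_n$ --- understood in the regularized, quasi-derivative sense --- with an infinite sum whose higher-order derivative series do not converge absolutely. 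None of the stated estimates provide this, and establishing it would amount to redoing the hardest analysis of the paper at one order higher. A secondary issue: your claim that the correction terms in \eqref{Phik0} ``vanish to the required order at $0$'' because of the Lagrange brackets also needs proof; the brackets $\langle\tilde\Phi^{\star}_{n-k+1},\tilde\Phi_{k_0+1}\rangle$ do not obviously kill the first $k_0$ quasi-derivatives at $x=0$ term by term, and in the paper this identification, too, is made only for the finite truncations. Your residue analysis of $M(\la)$ in the second and third paragraphs is sound in outline, but it rests on the unverified first step.
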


\begin{proof}
The proof is based on the approximation approach which has been considered in \cite{Bond23-res} for $n = 3$ in detail. The proof for higher orders $n$ is similar, so we omit the technical details and just outline the main idea.

Along with $\{ \la_{l,k}, \be_{l,k} \}_{l \ge 1, \, k = \overline{1,n-1}}$, consider the ``truncated'' data
$$
\la_{l,k}^N := \begin{cases}
                    \la_{l,k}, & l \le N, \\
                    \tilde \la_{l,k}, & l > N,
               \end{cases} 
\qquad
\be_{l,k}^N := \begin{cases}
                    \be_{l,k}, & l \le N, \\
                    \tilde \be_{l,k}, & l > N.
               \end{cases} 
$$

Then, by using $\{ \la_{l,k}^N, \be_{l,k}^N \}$ instead of $\{ \la_{l,k}, \be_{l,k} \}$, one can construct the main equation 
\begin{equation} \label{mainN}
(I - \tilde R^N(x)) \psi^N(x) = \tilde \psi^N(x), \quad x \in [0,1],
\end{equation}
analogously to \eqref{main}. It can be shown that equation \eqref{mainN} is uniquely solvable for sufficiently large values of $N$. Therefore, one can find the functions $\vv_{l,k,\eps}^N(x)$, $(l,k,\eps) \in V$, by using the solution $\psi^N(x)$ similarly to \eqref{findvv}. Then, construct the functions $\{ \Phi_k^N(x, \la) \}_{k = 1}^n$ and $p^N = (p_k^N)_{k = 0}^{n-2}$ analogously to \eqref{Phik0} and \eqref{recp}, respectively. The advantage of the ``truncated'' data $\{ \la_{l,k}^N, \be_{l,k}^N \}$ over $\{ \la_{l,k}, \be_{l,k} \}$ is that the series in \eqref{Phik0} and \eqref{recp} are finite, so one can show by direct calculations that $\{ \Phi_k^N(x, \la) \}_{k = 1}^n$ are the Weyl solutions of equation \eqref{eqv} with the coefficients $p^N$ and deduce that $\{ \la_{l,k}^N, \be_{l,k}^N \}$ are the spectral data of $p^N$. At this stage, the assumptions (S-1)--(S-3) of Theorem~\ref{thm:glob} are crucial. If these assumptions do not hold, then one needs additional data to recover $p^N$. Next, using \eqref{recp}, we show that 
$$
\lim_{N \to \infty} \| p_k^N - p_k \|_{W_2^{k-1}[0,1]} = 0, \quad k = \overline{0,n-2}. 
$$
Furthermore, it can be shown that the spectral data depend continuously on the coefficients $p = (p_k)_{k = 0}^{n-2}$. This concludes the proof.
\end{proof}

Let us summarize the arguments of this section in the proof of Theorem~\ref{thm:glob}.

\begin{proof}[Proof of Theorem~\ref{thm:glob}]
Let $\{ \la_{l,k}, \be_{l,k} \}$ and $\tilde p$ satisfy the hypothesis of the theorem. Then, the main equation \eqref{main} is uniquely solvable. By using its solution $\psi(x)$, we find the functions $\{ \vv_{l,k,\eps}(x) \}_{(l,k,\eps) \in V}$ by \eqref{findvv} and reconstruct the coefficients $(p_k)_{k = 0}^{n-2}$ by \eqref{recp}. By virtue of Lemma~\ref{lem:convp}, $p_k \in W_2^{k-1}[0,1]$ for $k = \overline{0,n-2}$. Lemma~\ref{lem:sd} implies that $\{ \la_{l,k}, \be_{l,k} \}$ are the spectral data of $p = (p_k)_{k = 0}^{n-2}$. Taking (S-1) and (S-2) into account, we conclude that $p \in W$. The uniqueness of $p$ is given by Corollary~\ref{cor:uniq}.
\end{proof}

\begin{proof}[Proof of Theorem~\ref{thm:loc}]
Let us show that, if data $\{ \la_{l,k}, \be_{l,k} \}_{l \ge 1, \, k = \overline{1,n}}$ and $\tilde p$ satisfy the conditions of Theorem~\ref{thm:loc} for sufficiently small $\de > 0$, then they also satisfy the hypothesis of Theorem~\ref{thm:glob}.

It follows from \eqref{condloc} that
$$
|\la_{l,k} - \tilde \la_{l,k}| \le \de, \quad |\be_{l,k} - \tilde \be_{l,k}| \le \de, \quad l \ge 1, \, k = \overline{1,n-1}.
$$

On the other hand, Definition~\ref{def:W} and the asymptotics \eqref{asymptla} and \eqref{asymptbe} imply that the eigenvalues $\{ \tilde \la_{l,k} \}_{l \ge 1}$ are separated for each fixed $k \in \{ 1, 2, \dots, n-1\}$ as well as for neighboring values of $k$ and the weight numbers $\{ \tilde \be_{l,k} \}$ are separated from zero. Rigorously speaking, we have
\begin{align*}
& |\tilde \la_{l,k} - \tilde \la_{l_0,k}| \ge \de_0, \quad l_0 \ne l, \quad k = \overline{1,n-1}, \\
& |\tilde \la_{l_0,k} - \tilde \la_{l,k+1}| \ge \de_0, \quad l,l_0 \ge 1, \quad k = \overline{1,n-2}, \\
& |\tilde \be_{l,k}| \ge \de_0, \quad l \ge 1, \quad k = \overline{1,n-1}.
\end{align*}

By choosing $\de < \de_0/2$, we achieve the conditions (S-1)--(S-3) of Theorem~\ref{thm:glob} for $\{ \la_{l,k}, \be_{l,k} \}_{l \ge 1, \, k = \overline{1,n-1}}$. Furthermore, (S-4) directly follows from \eqref{defxi} and \eqref{condloc}.

Using \eqref{estpsiR} and \eqref{defOmega}, we estimate 
$$
\| \tilde R(x) \|_{m \to m} = \sup_{v_0 \in V} \sum_{v \in V} |\tilde R_{v_0,v}(x)| \le C \sup_{l_0 \ge 1} \sum_{l = 1}^{\infty} \frac{\xi_l}{|l - l_0| + 1} \le C \Omega,
$$
where the constant $C$ depends only on $\tilde p$ and $\delta$ if $\Omega \le \delta$. Therefore, choosing a sufficiently small $\delta$ for the fixed $\tilde p$, we achieve $\| \tilde R(x) \| \le \frac{1}{2}$. Then, the operator $(I - \tilde R(x))$ has a bounded inverse, that is, the condition (S-5) of Theorem~\ref{thm:glob} is fulfilled.

Thus, the numbers $\{ \la_{l,k}, \be_{l,k} \}_{l \ge 1, \, k = \overline{1,n-1}}$ satisfy the conditions (S-1)--(S-5) of Theorem~\ref{thm:glob}, which implies the existence of $p = (p_k)_{k = 0}^{n-2} \in W$ with the spectral data $\{ \la_{l,k}, \be_{l,k} \}$. It is worth noting that, if $\Omega \le \delta$, then the constant $C$ in Lemmas~\ref{lem:vv}--\ref{lem:convp} depends only on $\tilde p$ and $\de$. Hence, the estimate \eqref{estp1} follows from Lemma~\ref{lem:convp}. This concludes the proof.
\end{proof}

\medskip

\section{Conclusion} \label{sec:concl}

In this paper, we have proved local solvability and stability of recovering the coefficients $(p_k)_{k = 0}^{n-2}$ of equation \eqref{eqv} from the spectral data, which consist of the eigenvalues $\{ \la_{l,k} \}_{l \ge 1}$ and the weight numbers $\{ \be_{l,k} \}_{l \ge 1}$ of the boundary value problems $\mathcal L_k$, $k = \overline{1,n-1}$. Moreover, we have obtained some sufficient conditions for the global solution of the inverse problem (Theorem~\ref{thm:glob}). The proof method is constructive. It is based on the reduction of the inverse problem to the linear main equation \eqref{main} in the Banach space $m$ and on the reconstruction formulas \eqref{recp} for the coefficients $p_k$, $k = \overline{0,n-2}$. We have proved the convergence of the series from the reconstruction formulas in the corresponding spaces $W_2^{k-1}[0,1]$, $k = \overline{0,n-2}$. Theorem~\ref{thm:loc} on local solvability and stability generalizes the analogous results of \cite{HM-sd} for $n = 2$ and of \cite{Bond23-res} for $n = 3$.

Our results have the following \textit{advantages} over the previous studies:
\begin{enumerate}
\item We for the first time proved local solvability and stability of an inverse spectral problem for differential operators with distribution coefficients of orders $n \ge 4$.
\item Our method is constructive. Basing on Procedure~\ref{alg:1}, one can develop a numerical method for solving the inverse problem.
\item Our method does not require self-adjointness. All the results of this paper are obtained for the general non-self-adjoint case.
\end{enumerate}

The results and the methods of this paper can be used for future development of spectral theory for higher-order differential operators with regular and distribution coefficients. Let us mention some \textit{open problems} in this direction.

\begin{enumerate}
\item In the method of spectral mappings, the unique solvability of the main equation plays an important role. In this paper, we have used the smallness of the spectral data perturbation (i.e. the smallness of $\de$ in Theorem~\ref{thm:loc}) to guarantee the existence of the main equation solution. It is worth considering other cases, when the unique solvability of the main equation can be proved. In particular, for $n = 2$ and $n = 3$, the main equation is uniquely solvable in the self-adjoint case. For differential operators on a finite interval for $n \ge 4$, this issue is open.
\item Using the technique of \cite{Bond22-asympt}, the following precise asymptotics can be obtained for the spectral data:
\begin{align*}
\la_{l,k} & = l^n \left(c_{0,k} + c_{1,k} l^{-1} + c_{2,k} l^{-2} + \dots + c_{n-1,k} l^{-(n-1)} + l^{-(n-1)} \varkappa_{l,k} \right), \\
\be_{l,k} & = l^n \left(d_{0,k} + d_{1,k} l^{-1} + d_{2,k} l^{-2} + \dots + d_{n-2,k} l^{-(n-2)} + l^{-(n-2)} \varkappa_{l,k}^0\right),
\end{align*}
where $c_{j,k}$, $d_{j,k}$ are constants and $\{ \varkappa_{l,k} \}$, $\{ \varkappa_{l,k}^0 \}$ are $l_2$-sequences. Hence, if $c_{j,k} = \tilde c_{j,k}$ and $d_{j,k} = \tilde d_{j,k}$, then $\{ l^{n-2} \xi_l \} \in l_2$, that is, the assumption (S-4) of Theorem~\ref{thm:glob} holds. Thus, in order to achieve (S-4), one has to construct a model problem with the known coefficients $c_{j,k}$ and $d_{j,k}$ in the spectral data asymptotics. The construction of such problem will help to finalize the global solvability results for higher-order inverse problems.
\item Remove the conditions (W-2) and (W-3) of Definition~\ref{def:W} (i.e. the simplicity of the spectra and the separation condition). Then, other spectral data are required and the problem becomes more technically complicated. In addition, one has to take splitting of multiple eigenvalues under small perturbations of the spectra into account (see \cite{BK19} for $n = 2$).
\item Obtain solvability conditions for differential expressions with coefficients of higher singularity orders than $p_k \in W_2^{k-1}[0,1]$. Although the uniqueness theorems in \cite{Bond21, Bond23-mmas, Bond23-reg} and the reconstruction approach \cite{Bond22-alg} are obtained for coefficients of wider distribution spaces, it is a challenge to prove the existence of the inverse problem solution.
\item Investigate the uniform stability of inverse problems for higher-order differential operators (see \cite{SS10, Hryn11} for $n = 2$).
\item Study the reconstruction of higher-order differential operators from the finite spectral data $\{ \la_{l,k}, \be_{l,k} \}_{l = 1}^N$. For $n = 2$, see, e.g., \cite{MW05, SS14}.
\end{enumerate}

\medskip

{\bf Funding.} This work was supported by Grant 21-71-10001 of the Russian Science Foundation, https://rscf.ru/en/project/21-71-10001/ (accessed on 13 August 2023).

\medskip

\noindent Natalia Pavlovna Bondarenko \\

\noindent 1. Department of Mechanics and Mathematics, Saratov State University, \\
Astrakhanskaya 83, Saratov 410012, Russia, \\

\noindent 2. Department of Applied Mathematics and Physics, Samara National Research University, \\
Moskovskoye Shosse 34, Samara 443086, Russia, \\

\noindent 3. Peoples' Friendship University of Russia (RUDN University), \\
6 Miklukho-Maklaya Street, Moscow, 117198, Russia, \\

\noindent e-mail: {\it bondarenkonp@info.sgu.ru}

\end{document}